\definecolor{will}{RGB}{2,167,179}
\theoremstyle{plain}
\newtheorem{theorem}{\sc Theorem}[section]
\newtheorem{prop}[theorem]{\sc Proposition}
\newtheorem{lem}[theorem]{\sc Lemma}
\newtheorem{cor}[theorem]{\sc Corollary}
\theoremstyle{definition}
\newtheorem{defn}[theorem]{\sc Definition}
\newtheorem{rem}[theorem]{\sc Remark}
\DeclareFontFamily{OMX}{MnSymbolE}{}
\DeclareSymbolFont{MnLargeSymbols}{OMX}{MnSymbolE}{m}{n}
\DeclareFontShape{OMX}{MnSymbolE}{m}{n}{
    <-6>  MnSymbolE5
   <6-7>  MnSymbolE6
   <7-8>  MnSymbolE7
   <8-9>  MnSymbolE8
   <9-10> MnSymbolE9
  <10-12> MnSymbolE10
  <12->   MnSymbolE12
}{}
\DeclareFontShape{OMX}{MnSymbolE}{b}{n}{
    <-6>  MnSymbolE-Bold5
   <6-7>  MnSymbolE-Bold6
   <7-8>  MnSymbolE-Bold7
   <8-9>  MnSymbolE-Bold8
   <9-10> MnSymbolE-Bold9
  <10-12> MnSymbolE-Bold10
  <12->   MnSymbolE-Bold12
}{}
\let\llangle\@undefined
\let\rrangle\@undefined
\DeclareMathDelimiter{\llangle}{\mathopen}%
                     {MnLargeSymbols}{'164}{MnLargeSymbols}{'164}
\DeclareMathDelimiter{\rrangle}{\mathclose}%
                     {MnLargeSymbols}{'171}{MnLargeSymbols}{'171}
\title{Maximum principles in symplectic homology}
\date{\today}
\author{Will J. Merry and Igor Uljarevic}
\begin{document}
\maketitle{}

\begin{abstract}
In the setting of symplectic manifolds which are convex at infinity, we use a version of the Aleksandrov maximum principle to derive uniform estimates for Floer solutions that are valid for a wider class of Hamiltonians and almost complex structures than is usually considered. This allows us to extend the class of Hamiltonians which one can use in the direct limit when constructing symplectic homology. As an application, we detect elements of infinite order in the symplectic mapping class group of a Liouville domain and prove existence results for translated points.
\end{abstract}

\section{Introduction}
\label{sec:intro}

Symplectic homology is a natural generalisation of Floer homology to open symplectic manifolds which are  convex at infinity. It is constructed via a direct limit of Floer homology groups for Hamiltonians which get steeper and steeper at infinity. This construction -- in the case where the symplectic manifold is exact -- is due to Viterbo \cite{Viterbo1999} -- although there were various previous flavours and incarnations. It has since been generalised to non-exact symplectic manifolds by Ritter \cite{Ritter2010,Ritter2013,Ritter2014}. \\

Since the underlying symplectic manifold is non-compact, there are additional technical difficulties in the construction of symplectic homology which are not present in the closed setting. Chief amongst these is the necessity  of a \textbf{maximum principle} to prevent Floer solutions from escaping to infinity. The need for such a maximum prinicple to hold severely limits the type of Hamiltonians which one can take in the aforementioned direct limit. The standard technique is to use Hamiltonians that are radial and linear on the convex end. More precisely, the assumption that our underlying symplectic manifold $(M,\omega)$ is convex at infinity means that there is a compact domain $M_1 \subset M$ such that the complement of $M_1$ is symplectomorphic to the positive part of the symplectisation of a closed contact manifold:
\begin{equation}
\label{eq:convex_at_infinity}
(  M \setminus M_1 , \omega ) \cong (\Sigma \times (1, \infty), d(r \alpha)),
\end{equation}
where $(\Sigma,\alpha)$ is a closed contact manifold and $r$ denotes the coordinate on $[1, \infty)$. Then one works with Hamiltonians $H_t$ on $M$ that are of the form 
\begin{equation}
\label{eq:linearham}
  H_t(x,r) = ar +b, 
\end{equation}
for large $r$. For such Hamiltonians the $r$-component of a solution of the Floer equation is necessarily subharmonic, and this prevents such solutions from escaping to infinity. \\

In \cite{Ritter2016},  Ritter extended the class of Hamiltonians to those which are of the form 
\begin{equation}
\label{eq:1homo_ham}
  H_t(x,r) = r h_t (x)  + b,
\end{equation}
for large $r$, where $h_t : \Sigma \to \mathbb{R}$ is a  function which is \textbf{invariant under the Reeb flow} of $\alpha$.  A similar construction was also given independently by the second author in \cite{Uljarevic2015}, and for time-independent $h$ a different proof was given by Fauck in his thesis \cite{Fauck2016}.

In this paper we further generalise this and remove the hypothesis that $h_t$ must be invariant under the Reeb flow. Namely, we prove:

\begin{theorem}[Extended maximum principle]
\label{prop:emp}
The maximum prinicple applies to \emph{\textbf{all}} Hamiltonians of the form \eqref{eq:1homo_ham}.
\end{theorem}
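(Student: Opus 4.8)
The plan is to work on the convex end $\Sigma \times (1,\infty)$ and extract a differential inequality for an appropriate scalar function built from the $r$-coordinate of a Floer solution $u = (x,\rho)$ (where $\rho$ denotes the $(1,\infty)$-component of $u$ and $x$ its $\Sigma$-component). In the classical linear case one shows $\Delta \rho \ge 0$; in Ritter's Reeb-invariant case one still gets subharmonicity of $\rho$ (or of $\log\rho$) because the Hamiltonian vector field of $rh_t(x)$ has no $\partial_r$-component when $h_t$ is Reeb-invariant. Once $h_t$ is allowed to be arbitrary, $X_{H_t}$ acquires a $\partial_r$-component proportional to the derivative of $h_t$ in the Reeb direction, and $\rho$ is no longer subharmonic — this is precisely where the naive argument breaks, and precisely where the Aleksandrov maximum principle advertised in the abstract must enter.

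**Main steps.** First I would compute, in the coordinates $(r,\Sigma)$ on the end, the Floer equation $\partial_s u + J_t(u)(\partial_t u - X_{H_t}(u)) = 0$ for a Hamiltonian of the form $H_t(x,r) = rh_t(x) + b$ and a $t$-dependent almost complex structure $J_t$ compatible with the cone structure (of contact type on the end). The key is to isolate the PDE satisfied by $\rho$. I expect to obtain an equation of the shape $\Delta \rho = \langle \text{(first-order terms)}, \nabla\rho\rangle + Q$, where $Q$ is a remainder controlled by the $\Sigma$-derivatives of $h_t$ and by $\partial_s\rho,\partial_t\rho$. The point is that $Q$ need not have a favourable sign pointwise, so $\rho$ is only a subsolution of a second-order elliptic operator "up to an error that is quadratic/linear in the full gradient of $u$." Second, I would invoke the Aleksandrov–Bakelman–Pucci estimate: on a bounded domain $\Omega \subset \mathbb{R}^2$ (a disc on which a hypothetical interior maximum of $\rho$ strictly exceeds its boundary value), a function satisfying $\Delta \rho \ge -C(1+|\nabla\rho|)$ — or more precisely $a^{ij}\partial_{ij}\rho + b^i\partial_i\rho \ge f$ with $f \in L^n$ — attains its maximum on $\partial\Omega$ up to a constant times $\|f\|_{L^n(\Omega)}$. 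The strategy is to rescale so that $f$ lives on the "contact slice" where the offending terms vanish, or to absorb the bad terms into the first-order coefficients $b^i$, which ABP (unlike the classical maximum principle) tolerates without sign conditions. Third, I would upgrade this local statement to the global conclusion: any Floer solution asymptotic to fixed $1$-periodic orbits (which lie in a compact region once the slopes are chosen to avoid the relevant Reeb-type periods) cannot have $\rho$ exceeding the maximum of its asymptotic values, hence stays in a compact set, giving the $C^0$-bound that "the maximum principle applies."

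**The main obstacle.** The crux is controlling the error term $Q$: I need it to be dominated by the structure that ABP can absorb, i.e. by terms linear in $\nabla\rho$ with coefficients in the right $L^p$ space, rather than genuinely indefinite zeroth-order contributions. This requires exploiting the specific algebraic form of $X_{H_t}$ for $H_t = rh_t(x)+b$: its $\partial_r$-component is $r$ times something depending only on $x$ (not on $r$), and its "horizontal" part splits along the Reeb direction versus the contact distribution in a way that, after pairing with $d\rho$ through a contact-type $J_t$, reproduces exactly the gradient terms one wants. I would also need to check that the Floer energy provides the $L^2$-gradient bounds feeding into the $L^p$-norm of $f$, and that the almost complex structure, while allowed to be more general than the standard cylindrical one, still satisfies enough compatibility at infinity (e.g. $J_t^*(dr) = -r\alpha$ asymptotically, or a controlled perturbation thereof) for the computation to close. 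Handling the $t$-dependence of both $h_t$ and $J_t$ uniformly — so that the constant $C$ in the ABP input does not blow up — is the remaining technical point; I expect this to follow from compactness of $S^1$ and of $\Sigma$ once the pointwise computation is in hand.
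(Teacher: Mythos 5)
Your overall strategy is the right one and matches the paper's: derive a second--order elliptic inequality for (the logarithm of) a radial coordinate of the Floer solution whose inhomogeneous term has no pointwise sign, absorb the first--order terms into the drift, and invoke the Aleksandrov estimate with the inhomogeneity controlled in $L^2$ by the energy. (Two small refinements in the paper: because the almost complex structures are only of \emph{twisted} SFT--type, the correct function is $\rho = Q_t^s(u) = q_t^s(w)\,r$ rather than $r$ itself, and the $L^2$ bound on the $\Sigma$--component of $\nabla u$ uses the weighting $|\eta|^2_{J_Q}\ge \varepsilon_Q\, r\,|w|^2_\Sigma$ from \eqref{eq-norm-compare}, so that the factor $r\ge T$ converts the finite energy into a bound on $\int_\Omega|\nabla w|^2_\Sigma$.)

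There is, however, a genuine gap: you never bound the \emph{diameter} of the domain $\Omega$ on which you apply the Aleksandrov/ABP estimate. The constant in \cite[Theorem 9.1]{GilbargTrudinger1983} depends on $\operatorname{diam}\Omega$ (and blows up with it), and the connected components of $\{r>T\}$ in $\mathbb{R}\times S^1$ could a priori be arbitrarily long in the $s$--direction; your appeal to the asymptotic orbits lying in a compact set does not prevent the solution from loitering at large $r$ over a long $s$--interval. Closing this requires the paper's Lemma \ref{lem-missingbit}: if the time--one map $\varphi_h^1$ of the contact isotopy has no fixed points, then any \emph{loop} contained far out in the cone has $\|\mathfrak{m}_{H,J}(v)\|_J$ bounded below by a positive constant (this is where the admissibility hypothesis does real work, via an integrating--factor argument for $\rho(t)=Q_t(v(t))$ along the loop), whence the energy bound $\mathbb{E}_{\mathbf J}(u)\le E$ forces each such component to have $s$--length at most $O(E)$ (Lemma \ref{lem-omega-bounded}). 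Without this step the ABP constant is not uniform and the argument does not close; with it, your outline becomes essentially the paper's proof.
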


\begin{rem}
Theorem \ref{prop:emp} is stated more precisely in Theorem \ref{thm:maxprin} below. Actually, the existence of a maximum principle depends not only the choice of Hamiltonian but also on the choice of almost complex structure (for simplicity we suppress this fact during the Introduction). The standard maximum principle requires one to work with an almost complex structure which is of SFT-type at infinity. In addition to extending the class of Hamitonians for which a maximum principle is applicable, we also extend the class of complex structures to those of \textbf{twisted SFT-type} at infinity (see Definition \ref{defn:tsft} below). As far as applications are concerned, extending the class of almost complex structures is just as important as extending the class of Hamiltonians -- see for instance Proposition \ref{prop:naturality} below).
\end{rem} 

We believe that Theorem \ref{prop:emp} will be useful in several situations. In the present paper we discuss two of them -- in the \hyperlink{out}{Outlook} at the end of this section we mention further applications that will appear in the sequel to this paper. \newline

Assume now that $(M, \omega)$ is a symplectic manifold which is convex at infinity as above, and for which the symplectic homology $\mathrm{SH}_*(M)$ is well defined. This really is an extra assumption: the minimum one needs to assume that $(M, \omega)$ is \textbf{$\text{weakly}^{+}$ monotone}, a condition introduced by Hofer and Salamon in \cite{HoferSalamon1995}, which requires that \textbf{at least one} of the following three conditions is fulfilled:
\begin{itemize}
  \item $\omega|_{\pi_2(M)} = 0$ or $c_1|_{\pi_2(M)} = 0$,
  \item $\omega$ is positively monotone: there exists $ \beta >0$ such that $(\omega - \beta c_1) |_{\pi_2(M)} =0$,
  \item the minimal Chern number $N$ of $(M^{2n},\omega)$ is at least $n-1$.
\end{itemize}
See Ritter \cite{Ritter2014} for details of the construction in this setting. At the other end of the spectrum, the ``simplest'' type of symplectic manifolds for which symplectic homology can be defined are \textbf{Liouville domains}. Here one starts with a compact exact symplectic manifold $(M_1, \omega_1 = d \lambda_1)$ with boundary $\Sigma = \partial M_1$, which in addition has the property that the vector field $Y_1$ defined implicitly by requiring that $\lambda_1 = \omega(Y_1, \cdot)$ points strictly outwards along $\Sigma$. Consequently there is a smooth function $r$ defined on a neighbourhood of $\Sigma$ in $M_1$ with values in $(0,1]$ such that $\Sigma = r^{-1}(1)$ and such that $d r (Y_1) = r$. This allows us to identify a neighbourhood of $\Sigma$ in $M_1$ with $ \Sigma \times (0,1]$. One then ``completes'' $(M_1, \omega_1)$ into a non-compact symplectic manifold $(M,\omega)$ by setting 
\[
M := M_1 \cup_{\Sigma}(  \Sigma \times \mathbb{R}^+ ),
\] 
We then extend $\lambda_1$ to a one-form $\lambda$ on $M$ by setting $\lambda = r \alpha$ on  $  \Sigma \times \mathbb{R}^+$. Setting $\omega = d \lambda$, the manifold $(M,\omega)$ is then obviously convex at infinity. This is the class of symplectic manifolds Viterbo \cite{Viterbo1999} originally worked with. \newline

Symplectic homology is oftentimes an infinite dimensional theory. The most famous early result, originally proved by Viterbo \cite{Viterbo1996} (and independently by Salamon-Weber \cite{SalamonWeber2006}, Abbondandolo-Schwarz \cite{AbbondandoloSchwarz2006}, and Abouzaid \cite{Abouzaid2015}) is that the symplectic homology of a cotangent bundle  $(T^*N, d \lambda_{\mathrm{can}})$ equipped with its standard symplectic form is equal to the singular homology of the free loop space:
\[
  \mathrm{SH}_*(T^*N) \cong \mathrm{H}_*(\Lambda(N); \mathbb{Z}_2).
\]
Nevertheless, motivated by the Seidel representation \cite{Seidel1997}, in \cite{Ritter2014} Ritter discovered that the existence of a \textbf{loop} of Hamiltonian diffeomorphisms with \textbf{positive} slope at infinity forces symplectic homology to be finite dimensional. Here is the precise statement.

\begin{theorem}[Ritter \cite{Ritter2014}]
\label{thm:sh_is_small}
Assume $(M,\omega)$ is a $\text{weakly}^+$ monotone symplectic manifold which is convex at infinity. If there exists a loop of Hamiltonian diffeomorphisms generated by a Hamiltonian $H_t : M \to \mathbb{R}$ which for $r$ large is of the form 
\[
  H_t(x,r) =r h_t(x) + b, \qquad h_t >0,
\]
such that
\begin{equation}
\label{inv}
h_t \text{ is invariant under the Reeb flow,}
\end{equation}
then the summand of symplectic homology coming from contractible orbits can be seen as a localisation (i.e quotient) of the quantum homology of $M$. 
\end{theorem}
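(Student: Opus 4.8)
The plan is to identify the contractible summand of $\mathrm{SH}_*(M)$ with a localisation of the quantum homology $QH_*(M)$ by means of the Seidel representation, the loop $\sigma = \{\phi_t\}$ supplying the element at which one localises; since quantum homology and the Seidel construction only see the contractible part, the statement is necessarily confined to that summand, which I suppress from the notation below. First I would attach to $\sigma$ its Seidel element: clutch two copies of the trivial Hamiltonian fibration $M \times D^2 \to D^2$ along $\sigma$ to obtain a fibration $E \to S^2$, and count finite-energy pseudo-holomorphic sections of $E$ to produce a class $\mathcal{S}(\sigma) \in \bigl(QH_*(M),\ast\bigr)$. The essential geometric input here is that, because $H_t(x,r) = r h_t(x) + b$ with $h_t > 0$ and $h_t$ invariant under the Reeb flow, the total space $E$ is again convex at infinity and, by the classical (Reeb-invariant) maximum principle, the relevant section moduli spaces are compact; this makes $\mathcal{S}(\sigma)$ well defined. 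Unlike the closed case, $\mathcal{S}(\sigma)$ need not be invertible in $QH_*(M)$, which is precisely why a localisation rather than an isomorphism appears.

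Next I would set up a cofinal telescope adapted to $\sigma$. The $k$-fold iterate $\sigma^k$ is a loop generated by (a Hamiltonian homotopic to) $kH_t$, and at infinity $kH_t = r(kh_t) + kb$ is again of the admissible form \eqref{eq:1homo_ham}, with Reeb-invariant slope $kh_t \to \infty$. These slopes are cofinal, so the direct limit defining symplectic homology may be computed along this sequence,
\[
\mathrm{SH}_*(M) \;=\; \varinjlim_{k} \, HF_*\bigl(kH_t\bigr),
\]
the connecting maps being Floer continuation maps. Because $kH_t$ generates a loop — its time-$1$ map is the identity — each term is isomorphic to quantum homology, $HF_*\bigl(kH_t\bigr) \cong QH_*(M)$: this is the standard fact that the Floer homology of a loop-generating Hamiltonian computes $QH_*$, with the isomorphism pinned down up to the Seidel automorphism.

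The technical heart is to show that, under these identifications, each connecting map $HF_*(kH_t) \to HF_*((k+1)H_t)$ becomes quantum multiplication by $\mathcal{S}(\sigma)$. This is a degeneration/gluing argument of TQFT type: one homotopes the continuation cylinder — carrying a monotone homotopy from $kH_t$ to $(k+1)H_t$ that ``inserts one extra copy of the loop $\sigma$'' — to a broken configuration consisting of a pseudo-holomorphic section of $E$ (contributing $\mathcal{S}(\sigma)$) glued onto a trivial continuation realising the identification with $QH_*(M)$. Granting this, the telescope becomes
\[
\mathrm{SH}_*(M) \;\cong\; \varinjlim \Bigl( QH_*(M) \xrightarrow{\ \cdot\, \mathcal{S}(\sigma)\ } QH_*(M) \xrightarrow{\ \cdot\, \mathcal{S}(\sigma)\ } \cdots \Bigr) \;\cong\; QH_*(M)\bigl[\mathcal{S}(\sigma)^{-1}\bigr],
\]
the last identification holding because $QH_*(M)$ is commutative. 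When $QH_*(M)$ is finite dimensional over the Novikov field the images of the $\mathcal{S}(\sigma)^{k}$ stabilise, and the localisation is then realised concretely as the quotient of $QH_*(M)$ by $\bigcup_{k}\ker\mathcal{S}(\sigma)^{k}$ — the ``quotient'' of the statement.

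I expect the main obstacle to be this last step, together with a foundational point buried in the telescope: when the generating diffeomorphism is the identity all $1$-periodic orbits are degenerate, so one must first perturb $kH_t$ and then check that the resulting Floer homology, the isomorphism with $QH_*(M)$, and its compatibility with continuation are all independent of the perturbation. Moreover every auxiliary Hamiltonian in play — the $kH_t$, the interpolating homotopies, and the loop-twisted data defining $E$ — is linear but not radial at infinity, so compactness of all the relevant moduli spaces rests on a maximum principle; in Ritter's original argument the Reeb-invariance hypothesis \eqref{inv} is exactly what guarantees this, and removing that hypothesis is the purpose of Theorem \ref{prop:emp} above.
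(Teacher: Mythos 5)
The paper does not actually prove Theorem \ref{thm:sh_is_small} --- it is quoted from Ritter \cite{Ritter2014} solely in order to deduce the corollary that hypothesis \eqref{inv} can be dropped --- so there is no internal proof to compare against; your sketch is a faithful outline of Ritter's own argument (Seidel element of the loop, cofinal telescope with $\mathrm{HF}_*(kH)\cong QH_*(M)$ via the naturality/Seidel isomorphism after perturbing the degenerate orbits, continuation maps identified with quantum product by $\mathcal{S}(\sigma)$, and the localisation realised as a quotient because $QH_*(M)$ is finite dimensional over the Novikov ring). The only point worth recording is that the compactness input you correctly flag as resting on Reeb-invariance of $h_t$ is precisely what Theorem \ref{prop:emp} of the present paper removes, which is the whole reason the theorem is restated here.
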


The only reason the $h_t$ was required to be invariant under the Reeb flow was that this was the only class for which the maximum principle was known to hold. Our Theorem \ref{prop:emp} removes this restriction.

\begin{cor}
The assumption \eqref{inv} in Theorem \ref{thm:sh_is_small} is not necessary.
\end{cor}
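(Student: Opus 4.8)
The plan is to trace through Ritter's proof of Theorem~\ref{thm:sh_is_small} and observe that the Reeb-invariance hypothesis \eqref{inv} is invoked at exactly one place: it is the condition under which a maximum principle was previously known to hold, and hence the condition that makes Hamiltonians of the form \eqref{eq:1homo_ham} admissible (and cofinal) in the direct limit computing $\mathrm{SH}_*(M)$, and that makes the associated Floer moduli spaces compact. Every other ingredient of the proof --- the Seidel-type representation induced by the loop $\{\phi_t\}$, the fact that the resulting map on symplectic homology is an automorphism of $\mathrm{QH}_*(M)$-modules given by the action of an invertible element, and the ensuing identification of the contractible summand of $\mathrm{SH}_*(M)$ with a localisation (quotient) of $\mathrm{QH}_*(M)$ --- is formal once the relevant Floer homologies and continuation maps exist, and makes no reference to the dynamics of the Reeb flow.

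First I would invoke Theorem~\ref{prop:emp}, in the precise form of Theorem~\ref{thm:maxprin}: for \emph{every} Hamiltonian of the form \eqref{eq:1homo_ham} and every almost complex structure of twisted SFT-type at infinity (Definition~\ref{defn:tsft}), Floer solutions enjoy a uniform $C^0$ bound and therefore cannot escape to infinity. Consequently $\mathrm{HF}_*(H_t)$ is well defined for all such $H_t$, monotone continuation maps exist whenever the slopes are ordered compatibly, and this enlarged class of Hamiltonians is still cofinal; by the usual formal argument that the direct limit is independent of the cofinal family, one recovers the same group $\mathrm{SH}_*(M)$ as in the Reeb-invariant case. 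In particular the loop $\{\phi_t\}$ generated by $H_t = r h_t(x) + b$ with $h_t > 0$ --- which need no longer satisfy \eqref{inv} --- now sits inside an admissible cofinal family, so the ``rotation trick'' on Floer cylinders induces a well-defined automorphism of $\mathrm{SH}_*(M)$, and Ritter's algebraic argument identifying the contractible part with a quotient of quantum homology goes through verbatim.

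The one point that genuinely requires checking --- and which I expect to be the main obstacle --- is bookkeeping of admissibility: one must confirm that the broader class of twisted SFT-type almost complex structures needed to run the maximum principle is simultaneously admissible for \emph{all} the auxiliary moduli spaces appearing in Ritter's construction, in particular the continuation-map and module-structure (``pair of pants'') moduli spaces, and that transversality and Gromov-type compactness still hold there. Since twisted SFT-type structures differ from SFT-type ones only in a controlled way near infinity and can be taken to agree with a genuinely admissible structure on the compact region where the relevant solutions are confined (by the $C^0$ estimate), this is routine but should be spelled out; it is exactly the kind of compatibility recorded in Proposition~\ref{prop:naturality}. No analytic input beyond Theorem~\ref{prop:emp} is needed, so the Corollary follows.
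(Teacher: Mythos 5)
Your argument is exactly the paper's: the invariance hypothesis \eqref{inv} enters Ritter's proof only through the maximum principle, and Theorem~\ref{prop:emp} (in the precise form of Theorem~\ref{thm:maxprin}) supplies that principle for all Hamiltonians of the form \eqref{eq:1homo_ham}, so the rest of the argument goes through unchanged. The paper gives no further detail beyond this observation, so your additional bookkeeping about admissibility of twisted SFT-type structures for the auxiliary moduli spaces is a welcome but inessential elaboration.
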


We conclude this Introduction by discussing applications of our maximum principle. The first extends previous work of the second author \cite{Uljarevic2015} and resolves a  conjecture a Biran-Giroux \cite{BiranGiroux2005}. Assume now that $(M,\omega = d \lambda)$ is the completion of a Liouville domain $(M_1, \omega_1 = d \lambda_1)$ with boundary $\Sigma = \partial M_1$. Consider the  subgroup 
\[
  \mathcal{G} := \left\{ \phi \in \mathrm{Symp}(M, \omega) \mid \phi^* \lambda - \lambda  = d f, \ \text{supp}(f) \subset \mathrm{int}(M_1)) \right\},
\]
and the subset
\[
  \mathcal{G}_c : = \left\{ \phi \in \mathcal{G} \mid \mathrm{supp}(\phi) \subset \mathrm{int}(M_1) \right\}.
\]
Clearly $\mathcal{G}_c$ is a subgroup of the group $\mathrm{Symp}_c(M_1, \omega_1)$ of compactly supported symplectomorphisms of $(M_1, \omega_1)$. In fact, the inclusion $\mathcal{G}_c \hookrightarrow \mathrm{Symp}_c(M ,\omega)$ is a homotopy equivalence. This is due to Biran-Giroux \cite{BiranGiroux2005}; a  proof can be found in \cite[Lemma 3.3]{Uljarevic2015}. \\

Next, if we denote by $\alpha$ the restriction of the one-form $\lambda_1$ on $M_1$ to the boundary $\Sigma$, then $\alpha$ is a contact form on $\Sigma$. Let us denote the associated contact distribution by $\xi$, and the group of contactomorphisms by $\mathrm{Cont}(\Sigma,\xi)$. An element $\phi \in \mathcal{G}$ naturally defines an element of $\mathrm{Cont}(\Sigma, \xi)$. Indeed, for such a $\phi$ there exists $T > 0$ such that on $\Sigma \times (T, \infty)$, one has
\[
  \phi(x,r) = \left(\varphi(x), \frac{r}{\kappa(x)} \right),
\]
where $\varphi \in \mathrm{Cont}(\Sigma, \xi)$ and $\kappa : \Sigma \to (0, \infty)$ satisfies $ \varphi^*\alpha = \kappa \alpha$. Thus there is a well defined map, called the \textbf{ideal restriction map}, given by
\[
  \Theta : \mathcal{G} \to \mathrm{Cont}(\Sigma, \xi), \qquad \Theta(\phi) := \varphi.
\]
Moreover, Biran and Giroux discovered \cite{BiranGiroux2005} that the map $\Theta$ is a Serre fibration over $\mathrm{Cont}(\Sigma, \xi)$ with fibre $\mathcal{G}_c$. This implies there is a long exact sequence 
\[
  \begin{tikzcd}  
 \pi_k( \mathrm{Symp}_c(M_1 ,\omega_1)) \ar[r] & \pi_k(\mathcal{G}) \ar[r] & \pi_k(\mathrm{Cont}(\Sigma, \xi)) \ar[r, "\Delta"] &\pi_{k-1}(\mathrm{Symp}_c(M_1,\omega_1))
   \end{tikzcd}
\]
Denoting by 
\[
  \mathrm{SMCG}(M_1 ,\omega_1) := \pi_0 (\mathrm{Symp}_c(M_1,\omega_1)), 
\]
the \textbf{symplectic mapping class group} of $(M_1, \omega_1)$, the connecting homomorphism 
\[
  \Delta : \pi_1( \mathrm{Cont}(\Sigma, \xi)) \to \mathrm{SMCG}(M_1,\omega_1) 
\]
is given as follows. Given a \textbf{loop} $  \varphi^t : \Sigma \to \Sigma $, $t \in S^1$,  of contactomorphisms based at the identity, choose a lift $[0,1] \ni t \mapsto \phi^t \in \mathcal{G}$ such that $\Theta(\phi^t) = \varphi^t$. Then set 
\begin{equation}
\label{eq:Delta}
  \Delta( [ \varphi^t]) := [ \phi^1],
\end{equation}
where in both cases $[ \cdot]$ denotes the appropriate equivalence relation. Explicitly, if $\varphi^t$ has \textbf{contact Hamiltonian} $h_t : \Sigma \to \mathbb{R}$ (see \eqref{eq-Xh} below for the definition of $h_t$), define $H_t :M \to \mathbb{R}$ by first setting
\[
   H_t(x,r) = \beta(r) r h_t(x),\qquad \forall \, (x,r) \in \Sigma \times \mathbb{R}^+, 
\]
where 
\begin{equation}
\label{eq:beta}
  \beta : (0, \infty) \to [0, 1], \qquad \beta(r) = \begin{cases}
  0, & r \in (0,1/4), \\
  1, & r \in (3/4, \infty),
  \end{cases}
   \qquad \beta'(r) \ge 0,
\end{equation}
 and then extending $H_t$ to all of $M$ by setting $H_t = 0$ on the rest of $M$. Then if $\phi_H^t$ denotes the Hamiltonian flow of $H$, one has $\Theta(\phi_H^t) = \varphi^t$, and hence $ \Delta([\varphi^t]) = [ \phi^t_H]$.

Biran and Giroux conjectured that if the symplectic homology of $M$ is sufficiently rich and $\varphi^t $ is a \textbf{positive} loop (this means that the contact Hamiltonian $h_t$ is everywhere positive) then the element $\Delta( [ \varphi^t])$ should be of \textbf{infinite order} inside $\mathrm{SMCG}(M_1,\omega_1)$. In the following theorem, the total dimension of the symplectic homology of $M$ is denoted by $\dim \mathrm{SH}_*(M)$.

 \begin{theorem}
  \label{thm:infinite_order}
  Let $(M_1^{2n}, \omega_1)$ be a Liouville domain with completion $(M,\omega)$. Let $\varphi_t:\Sigma\to\Sigma$ be a positive loop of contactomorphisms on the boundary $\Sigma:=\partial M.$ If
\[ 
\sum_{k = 1}^{2n} \dim \mathrm{H}_k(M_1; \mathbb{Z}_2) <  \dim \mathrm{SH}_*(M)
 \]
 then the element $\Delta([\varphi^t])$ from \eqref{eq:Delta} is of infinite order in the symplectic mapping class group of $(M_1, \omega_1)$.
  \end{theorem}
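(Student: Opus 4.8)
The plan is to show that if $\Delta([\varphi^t])$ had finite order, then $\mathrm{SH}_*(M)$ would be forced to have total dimension at most $\sum_{k=1}^{2n}\dim\mathrm{H}_k(M_1;\mathbb Z_2)$, contradicting the hypothesis. The point of contact with the earlier material is this: iterating the loop $\varphi^t$ $N$ times produces again a positive loop of contactomorphisms, whose associated Hamiltonian $H^{(N)}_t$ is (for $r$ large) of the form $r\,h^{(N)}_t(x)+b$ with $h^{(N)}_t>0$ — but with slope growing linearly in $N$. By Theorem \ref{prop:emp} the maximum principle applies to all such Hamiltonians, with no Reeb-invariance hypothesis, so each $H^{(N)}$ is admissible for defining Floer homology; and because the slopes $\to\infty$, the groups $\mathrm{HF}_*(H^{(N)})$ are cofinal in the direct system computing $\mathrm{SH}_*(M)$. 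Hence $\mathrm{SH}_*(M)=\varinjlim_N \mathrm{HF}_*(H^{(N)})$.

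Next I would exploit finiteness of the order. Suppose $\Delta([\varphi^t])^m=1$ in $\mathrm{SMCG}(M_1,\omega_1)$; equivalently $\phi^1_{H^{(m)}}$ is isotopic, through compactly supported symplectomorphisms, to the identity (after correcting by an element of $\mathcal G_c$, using that $\Theta$ is a Serre fibration with fibre $\mathcal G_c$ and the long exact sequence displayed above). The element $\phi^1_{H^{(m)}}$ acts on Floer homology by a continuation-type isomorphism, and when it is isotopic to the identity this action is (up to the natural identifications) the identity. More importantly, the $\mathbb Z_2$-vector space $\mathrm{HF}_*(H^{(m)})$ — the fixed-point Floer homology of the symplectomorphism $\phi^1_{H^{(m)}}$, equivalently the Floer homology of the mapping torus — depends only on the isotopy class of $\phi^1_{H^{(m)}}$, hence equals $\mathrm{HF}_*(\mathrm{id})\cong\mathrm{H}_*(M_1;\mathbb Z_2)$ (the Floer homology of a $C^2$-small Morse function on the Liouville domain, which sees only $M_1$). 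Therefore $\dim\mathrm{HF}_*(H^{(m)})=\sum_k\dim\mathrm{H}_k(M_1;\mathbb Z_2)$.

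The final step is to pass this bound through the direct limit. Since $\{H^{(N)}\}$ is cofinal, $\mathrm{SH}_*(M)$ is a quotient of $\mathrm{HF}_*(H^{(N)})$ for $N$ large — more precisely, every class in $\mathrm{SH}_*(M)$ is represented at some finite stage and the continuation maps $\mathrm{HF}_*(H^{(m)})\to\mathrm{HF}_*(H^{(Nm)})$ followed by the map to the limit are surjective onto the image of $\mathrm{SH}_*(M)$. But composing $m$ copies of the loop into $\mathrm{HF}_*(H^{(Nm)})$ again gives the Floer homology of an iterate of $\phi^1_{H^{(m)}}$, which has order dividing $N$ in the relevant sense, so $\dim\mathrm{HF}_*(H^{(Nm)})$ is likewise bounded by $\sum_k\dim\mathrm{H}_k(M_1;\mathbb Z_2)$, uniformly in $N$. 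Hence $\dim\mathrm{SH}_*(M)\le\sum_{k=1}^{2n}\dim\mathrm{H}_k(M_1;\mathbb Z_2)$, contradicting the hypothesis, and so $\Delta([\varphi^t])$ has infinite order.

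I expect the main obstacle to be the second step: making precise the identification of $\mathrm{HF}_*(H^{(m)})$ with the fixed-point Floer homology of $\phi^1_{H^{(m)}}$ and its invariance under compactly supported symplectic isotopy, including the bookkeeping of the correction term in $\mathcal G_c$ coming from the Biran–Giroux fibration, and ensuring the grading and the (possibly twisted, via a Novikov/quantum coefficient system in the $\text{weakly}^+$ monotone case) coefficients are tracked consistently so that the dimension count over $\mathbb Z_2$ is legitimate. The cofinality and maximum-principle inputs, by contrast, are immediate from Theorem \ref{prop:emp} once one checks that iterated positive loops stay in the class \eqref{eq:1homo_ham}.
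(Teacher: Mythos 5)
Your overall skeleton --- argue by contradiction, iterate the loop so the slopes tend to infinity, show the corresponding Floer groups are cofinal and each has dimension bounded by $\dim \mathrm{H}_*(M_1;\mathbb{Z}_2)$, and conclude that $\dim\mathrm{SH}_*(M)$ is too small --- is exactly the paper's. However, the step you describe as ``the main obstacle'' is a genuine gap, not bookkeeping. You justify $\dim\mathrm{HF}_*(H^{(m)})=\sum_k\dim\mathrm{H}_k(M_1;\mathbb{Z}_2)$ by asserting that $\mathrm{HF}_*(H^{(m)})$ is the fixed-point Floer homology of the time-one map $\phi^1_{H^{(m)}}$ and therefore depends only on its compactly supported isotopy class. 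In the convex-at-infinity setting this is false: the groups $\mathrm{HF}_*(h)$ entering the direct system depend on the slope $h$ at infinity and not merely on the time-one map (Section 4 states explicitly that Floer homologies for different contact Hamiltonians are in general not isomorphic --- this slope-dependence is the entire reason symplectic homology is a nontrivial direct limit). The Hamiltonian $H^{(m)}$ has slope roughly $m\,h$, while the identity is realised by a Hamiltonian of slope roughly $0$; there is no a priori isomorphism between their Floer homologies even though the time-one maps are compactly supported isotopic. What the paper uses instead is the naturality isomorphism of Proposition \ref{prop:naturality}: the triviality of $\Delta([\varphi^t])$ supplies a compactly supported periodic Hamiltonian $L$ with $\phi^1_L=\phi^1_H$, so that $G_t$ generating $\phi^t_H\circ(\phi^t_L)^{-1}$ is a genuine \emph{loop} of Hamiltonian diffeomorphisms of $M$ with positive slope; applying $\mathcal{N}(G^m)$ then identifies $\mathrm{HF}(f^m)$, for $f^m_t=\varepsilon\cdot\kappa^t_{g^m}\circ(\varphi^t_{g^m})^{-1}+g^m_t$, with $\mathrm{HF}(\varepsilon)\cong\mathrm{H}(M_1,\Sigma;\mathbb{Z}_2)$. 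Producing that loop on $M$ is precisely where the contradiction hypothesis does its work; your argument uses the hypothesis only to change an isotopy class of a time-one map, which does not suffice.

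Two smaller points. First, the unperturbed iterates $H^{(N)}$ are not admissible Floer data: since $\varphi^t$ is a loop, its time-one contactomorphism is the identity, so condition (1) of $\mathcal{H}_{\circ}$ (no fixed points of $\varphi^1_h$ on $\Sigma$) fails and $\mathrm{HF}_*(H^{(N)})$ is not defined as written; the $\varepsilon$-term in $f^m$ is there to repair exactly this, and is chosen so that it transforms into the constant slope $\varepsilon$ under the naturality isomorphism. Second, your final passage to the direct limit is essentially the paper's (the point being that $\min f^m\ge m\cdot\min h\to\infty$, so every $\mathrm{HF}_*(a)\to\mathrm{SH}_*(M)$ factors through some $\mathrm{HF}_*(f^m)$), but the phrase ``$\mathrm{SH}_*(M)$ is a quotient of $\mathrm{HF}_*(H^{(N)})$'' should be replaced by this factorisation argument.
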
 
As remarked above, a special case of the theorem, in which the loop $\varphi^t$ is strict, was proved in \cite{Uljarevic2015}. In addition, recently progress has been made in several related directions by other authors: Seidel \cite{Seidel2014}, Chiang, Ding and van Koert  \cite{ChiangDingvanKoert2014,ChiangDingvanKoert2016}, and Barth, Geiges and Zehmisch \cite{BarthGeigesZehmisch2016}. See Remarks 4.1 and 4.6 in \cite{BarthGeigesZehmisch2016} for a more detailed historical overview.


Another application of the maximum principle pertains to \textbf{translated points}. Given a contactomorphism $ \varphi : \Sigma \to \Sigma$ of a contact manifold $ ( \Sigma, \alpha)$, a translated point of $ \varphi$ is a point $x$ such that $ \varphi(x)$ and $x$ both lie on the same Reeb orbit of $ \alpha$, and such that $ \varphi^* \alpha|_{x} = \alpha_x$. This notion was introduced by Sandon in \cite{Sandon2012}, although it can be seen as a special case of a leaf-wise intersection point as introduced by Moser in \cite{Moser1978}. Using Rabinowitz Floer homology, the first author and Albers proved:
\begin{prop}[ \cite{AlbersMerry2013a}]
Let $(M_1^{2n}, \omega_1)$ be a Liouville domain with completion $(M,\omega)$. Suppose $ \mathrm{SH}_*(M)$ is infinite dimensional. Then any contactomorphism of the boundary isotopic to the identity through contactomorphisms either has infinitely many translated points or a translated point on a closed Reeb orbit.
\end{prop}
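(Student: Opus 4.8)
The plan is to realise the translated points of $\varphi$ as the critical points of a perturbed Rabinowitz action functional, and then to play off the invariance of Rabinowitz Floer homology under such perturbations against its infinite-dimensionality, following Albers--Merry. First I would fix a contact isotopy $\{\varphi^t\}_{t\in[0,1]}$ from $\mathrm{id}_\Sigma$ to $\varphi$, with contact Hamiltonian $h_t\colon\Sigma\to\mathbb{R}$, and take a compactly supported $H_t\colon M\to\mathbb{R}$ equal to $(x,r)\mapsto r\,h_t(x)$ near $\Sigma$ — so there it is exactly of the form \eqref{eq:1homo_ham}, obtained from the cutoff $\beta$ of \eqref{eq:beta} as in the construction preceding Theorem \ref{thm:infinite_order} — so that the ideal restriction of the time-one flow $\phi_H^1$ is $\varphi$. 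Choosing an autonomous $F\colon M\to\mathbb{R}$ with $F^{-1}(0)=\Sigma$ whose Hamiltonian vector field restricts to the Reeb field along $\Sigma$, I would form the perturbed Rabinowitz action functional $\mathcal{A}$ on $C^\infty(S^1,M)\times\mathbb{R}$,
\[
\mathcal{A}(\gamma,\eta)\;=\;\int_{S^1}\gamma^*\lambda\;-\;\eta\int_0^1 F(\gamma(t))\,dt\;-\;\int_0^1 H_t(\gamma(t))\,dt,
\]
and use the leaf-wise intersection formalism of Albers--Frauenfelder to identify $\mathrm{Crit}(\mathcal{A})$ with the translated points of $\varphi$: a critical point $(\gamma,\eta)$ meets $\Sigma$ in a translated point $x$, the multiplier $\eta$ records the Reeb time from $x$ to $\varphi(x)$, and the time-dependence of $H$ breaks the $S^1$-reparametrisation symmetry of the unperturbed problem, so the correspondence is essentially bijective. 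In particular, if $\varphi$ has only finitely many translated points and none of them lies on a closed Reeb orbit, then the multiplier attached to each is uniquely determined and $\mathrm{Crit}(\mathcal{A})$ is a finite set (on which $|\eta|$ is bounded), hence compact.

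With this in place I would argue by contradiction. Suppose $\varphi$ has finitely many translated points, none on a closed Reeb orbit, so $\mathrm{Crit}(\mathcal{A})$ is finite. Then, after a generic perturbation of the auxiliary data (or via a local Floer homology / Morse--Bott construction at the critical points), $\mathrm{RFH}(\mathcal{A})$ is defined and, being generated by a finite set, is finite dimensional. On the other hand, rescaling $h_t\rightsquigarrow sh_t$ for $s\in[0,1]$ gives a homotopy from $\mathcal{A}$ to the unperturbed Rabinowitz functional, whose Floer homology is $\mathrm{RFH}(\Sigma,M)$; since the interpolating perturbations $\{sH_t\}$ have uniformly bounded Hofer length, the usual Rabinowitz continuation argument produces an isomorphism $\mathrm{RFH}(\mathcal{A})\cong\mathrm{RFH}(\Sigma,M)$. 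But by the long exact sequence of Cieliebak--Frauenfelder--Oancea relating $\mathrm{RFH}(\Sigma,M)$ to $\mathrm{SH}_*(M)$ and $\mathrm{SH}^*(M)$, the hypothesis $\dim\mathrm{SH}_*(M)=\infty$ forces $\dim\mathrm{RFH}(\Sigma,M)=\infty$. Thus $\infty=\dim\mathrm{RFH}(\Sigma,M)=\dim\mathrm{RFH}(\mathcal{A})<\infty$, a contradiction; hence $\varphi$ has infinitely many translated points, or one lying on a closed Reeb orbit.

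The one genuinely delicate step is making the Rabinowitz Floer homologies above, and the continuation maps between them, well defined — i.e. the a priori bounds confining the relevant Floer cylinders to a compact part of $M$. The $L^\infty$-bound on the multiplier $\eta$ in terms of the energy, and the bound on the action of elements of $\mathrm{Crit}(\mathcal{A})$ in terms of the Hofer length of $\{\varphi^t\}$, are by now standard in Rabinowitz Floer theory. The missing piece is control of the $r$-coordinate of Floer solutions, and this is where the present paper is used: the Hamiltonians occurring here are $1$-homogeneous of the form $r\,h_t(x)$ with $h_t$ an \emph{arbitrary} function on $\Sigma$, not invariant under the Reeb flow, and it is natural — both for transversality and for the naturality properties one wants — to allow almost complex structures of twisted SFT-type (Definition \ref{defn:tsft}); the classical subharmonicity argument does not apply, and one invokes instead the extended maximum principle, Theorem \ref{prop:emp} (precisely, Theorem \ref{thm:maxprin}). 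The remaining analytical points — transversality and gluing — are routine.
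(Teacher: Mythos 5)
Your argument is essentially the original Rabinowitz--Floer proof of Albers--Merry \cite{AlbersMerry2013a}, whereas the point of Section \ref{sec:tp} is to give a \emph{new} proof inside Hamiltonian Floer homology. The paper's route is much more direct: for each $c$ one takes the Hamiltonian $F^c$ of \eqref{Fc} with slope $f^c_t = h_t + c\,\kappa^t$, whose $1$-periodic orbits in the cone are exactly the translated points of $\varphi^1$ of period $c$; since $\kappa^t>0$ the slopes $f^c$ are monotone and cofinal in $c$, so $\varinjlim_c \mathrm{HF}_*(f^c) = \mathrm{SH}_*(M)$, and if the periods of translated points were bounded by some $C$ (and none lay on a closed Reeb orbit) the continuation maps $\mathrm{HF}(f^c)\to\mathrm{HF}(f^{c'})$ would be isomorphisms for all $C<c<c'$, forcing the limit to be finite dimensional. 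Here the extended maximum principle is genuinely indispensable, because $f^c$ is an arbitrary (non Reeb-invariant) positive function on $\Sigma$. No Rabinowitz functional, no Lagrange multiplier bounds, and no appeal to the exact sequence of \cite{CieliebakFrauenfelderOancea2010} are needed.

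Your version can be made to work --- it is, after all, the published proof --- but as written it has two problems. First, an internal inconsistency: a Hamiltonian obtained from the cutoff $\beta$ of \eqref{eq:beta} as in the construction preceding Theorem \ref{thm:infinite_order} equals $r\,h_t(x)$ for all $r>3/4$ and is therefore \emph{not} compactly supported; conversely, the compactness package of Albers--Frauenfelder/Albers--Merry (uniform bounds on $\eta$ and on the action in terms of the Hofer norm of the perturbation) is set up precisely for \emph{compactly supported} perturbations of the Rabinowitz functional. You must choose one: either cut off $H$ at large $r$, in which case the classical maximum principle already confines the Floer cylinders and Theorem \ref{thm:maxprin} plays no role, or keep $H$ linear at infinity, in which case you are outside the Albers--Frauenfelder framework and would have to redo their $\eta$- and action-bounds, not merely the $r$-bound. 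So the sentence claiming that the present paper supplies ``the missing piece'' of your argument is not accurate. Second, the step ``$\mathrm{Crit}(\mathcal{A})$ finite $\Rightarrow$ $\mathrm{RFH}(\mathcal{A})$ finite dimensional'' needs the local-Floer-homology (or approximate-and-pass-to-the-limit) argument you only gesture at parenthetically; this is where the dichotomy ``infinitely many translated points \emph{or} one on a closed Reeb orbit'' actually gets used, and it deserves more than an aside. If you intend to present a proof in the spirit of this paper, I would encourage you to redo it with the slopes $f^c$ as above.
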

In Section \ref{sec:tp} we give a new proof of this result using the new maximum principle.\newline

\hypertarget{out}{\textit{Outlook:}} The present paper concludes by briefly explaining how to define the analogue of \textbf{positive symplectic homology} in this setting. This gives rise to an invariant $ \mathrm{SH}^+( \varphi^t, M)$ associated to a path of contactomorphisms on the boundary of a Liouville domain, which we call the \textbf{postive symplectic homology for the contact isotopy $ \varphi^t$}. It is defined whenever $ \varphi^1 \ne \mathrm{Id}$, and, up to a grading shift, depends only on the time-1 map $ \varphi^1$. For $ \varphi^t$ equal to small piece of the Reeb flow this group agrees with the usual positive symplectic homology  $ \mathrm{SH}^+(M)$. In general though this is not the case, and the groups $ \mathrm{SH}^+( \varphi^t, M)$ can be used to obtain more refined existence results on translated points. We will study these groups thoroughly in a sequel to the present paper.

A further variant on this idea is to construct groups $ \mathrm{SH}(U,M)$ associated to an open set $ U $ of the boundary $ \Sigma$, by using contact Hamiltonians associated to contact isotopies that are supported inside $U$. This gives applications to contact embedding and squeezing problems, and will again be discussed in this paper's sequel. \newline

Finally, we remark that Groman \cite{Groman2015} has recently developed a novel approach to obtaining $L^{\infty}$-bounds for Floer solutions on non-compact manifolds. It would be interesting to see whether his methods can recover the results proved here.  \newline

\emph{Acknowledgement.} We thank Peter Albers, Paul Biran, Leonid Polterovich and Alex Ritter for several illuminating discussions during the preparation of this article. The second author was partially supported by the European Research Council (ERC) under the European Union's Horizon 2020 research and innovation programme, starting grant No.~637386.

\section{Preliminaries on symplectisations}
\label{sec-preliminaries}
Although we are primarily interested in non-compact manifolds that are convex at infinity, almost all of our arguments will take place in the symplectisation of a closed contact manifold. Thus we begin by fixing some notation in this setting.

Suppose $ ( \Sigma, \xi)$ is a closed coorientable contact manifold of dimension $2n-1$. Let $S \Sigma := \Sigma \times \mathbb{R}^+$ denote the \textbf{symplectisation} of $ \Sigma$. A choice of contant form $ \alpha$ on $ \Sigma$ supporting $ \xi$ gives rise to a symplectic form $ d \lambda$ on $ S \Sigma$. The one-form $ \lambda$ is defined by $ \lambda = r \alpha$, where $ r $ is the coordinate on $ \mathbb{R}^+$. Given $ r  >0 $, we abbreviate by $S\Sigma(r)$ the non-compact open manifold $ \Sigma \times ( r, \infty)$. 

We denote by $Y = r \partial_r$ the \textbf{Liouville vector field} on $  S \Sigma$, and we denote by $R$ the \textbf{Reeb vector field} on $ \Sigma$, which we also think of as a vector field on $ S \Sigma$. Note that the symplectic complement of the 2-plane field spanned by $R$ and $Y$ is exactly the contact distribution $ \xi$.

Suppose $j$ is an almost complex structure on the symplectic vector bundle $( \xi ,d \alpha) \to \Sigma$ which is compatible\footnote{Throughout we use the sign convention that an almost complex structure $I$ is compatible with a symplectic form $ \Omega$ if $ \Omega(I \cdot, \cdot)$ is a positive definite symmetric form.} with $ d \alpha$. The almost complex structure $j$ uniquely determines an almost complex structure $J$ on $S \Sigma$ itself, which is defined by 
\begin{equation}
\label{eq-SFT}
J ( aR + bY + v) = aY - b R + jv, \qquad a, b \in \mathbb{R}, \qquad v \in \xi.
\end{equation}
We say that $J$ is of \textbf{SFT-type}.\\ 

We now generalise this notion. Suppose we are given a smooth positive functions $q : \Sigma \to \mathbb{R}^+$. Let $ Q : S \Sigma \to \mathbb{R}^+$ be defined by 
$$Q(x,r) = rq(x).$$
Observe that the Hamiltonian vector field\footnote{Our sign convention is that $ d \lambda(X_Q, \cdot) = -dQ$.} $X_Q$ of $Q$ takes the form
\begin{equation}
\label{eq-XbQ}
X_Q(x,r) = X_q(x) - dq(x)R(x)Y(r),
\end{equation}
where $X_q$ is the vector field on $ \Sigma$ defined by 
$$ \alpha (X_q) = q, \qquad d \alpha(X_q, \cdot) = dq(R) \alpha - d q.$$
Let $N_Q$ denote the symplectic complement of the 2-plane field spanned by $X_Q$ and $Y$ in $T (S \Sigma)$. Explicitly, 
$$ N_Q(x,r) = \left\{ \left( v, -\frac{rdq(x)v}{q(x)} \right)  \mid v \in \xi_x\right\}.$$
Thus given $ v \in \xi_x$ there is a unique vector $ \zeta_Q(v) \in N_Q(x,r)$ such that the first component of $ \zeta_Q(v)$ is $v$.
\begin{defn}
\label{defn:tsft}
Let $q \in C^{ \infty}( \Sigma, \mathbb{R}^+)$ denote a smooth positive function, and set $Q = rq$ as above. Let $j$ denote a compatible almost complex structure on $ \xi$. The pair $(q,j)$ determines an almost complex structures\footnote{This notation is slightly inconsistent, since $J_Q$ depends on both $Q$ and $j$. In general we will suppress the dependence of $j$ in all our notation.} $J_Q$ on $ S \Sigma$ which, following Albers-Frauenfelder \cite{AlbersFrauenfelder2010} we will call of \textbf{twisted SFT-type}, via the formula:
\begin{equation}
\label{eq-twisted-SFT}
J_Q( a X_Q + b Y + \zeta_Q(v)) = bX_Q - a Y + \zeta_Q(jv), \qquad a,b \in \mathbb{R}, \qquad v \in \xi.
\end{equation}
\end{defn}
Observe that taking $q = 1$ recovers the original notion \eqref{eq-SFT} of an SFT-type almost complex structure (in this case $X_Q = R$.) Given $ \eta \in T_{(x,r)}( S \Sigma)$, write $  \eta = w + c Y$, where $ w \in T \Sigma$ and $c \in \mathbb{R}$. Setting 
\begin{equation}
\label{eq-components}
a = \frac{ \lambda_{(x,r)} ( \eta)}{Q(x,r)}, \qquad b = \frac{d Q(x,r) \eta}{Q(x,r)},
\end{equation}
and 
$$ v := w - a X_q(x),$$
we also have 
$$ \eta = a X_Q + b Y + \zeta_Q(v).$$
If $ \eta = a'X_Q + b'Y + \zeta_Q(v')$ is another tangent vector to $S \Sigma$ then 
\begin{align}
d \lambda( J_Q \eta , \eta ') & = (dr \wedge \alpha + r d \alpha)(J_Q(a X_Q + b Y + \zeta_Q(v)), a'X_Q + b' Y + \zeta_Q(v')) \nonumber \\
& = (dr \wedge \alpha + r d \alpha)(-b X_Q + a Y + \zeta_Q( jv)), a'X_Q + b' Y + \zeta_Q(v')) \nonumber \\ 
& = dr(-bX_Q + aY) \alpha(a'X_Q) - dr(a'X_Q + b'Y) \alpha( -bX_Q) + r d \alpha( jv, v')\nonumber \\
& = ( brdq(R) + ra) a'q - (-a' r dq(R) + rb') (-b q) + r d \alpha( jv,v') \nonumber \\
& = Q(aa' + bb') + r d \alpha ( jv,v'), \label{eq-norm}
\end{align}
which shows that $J_Q$ is compatible with $ d \lambda$. From now on we abbreviate 
$$ | \eta|_{J_Q}^2 := d \lambda(J_Q \eta, \eta) \ge 0.$$

Let us now fix a ``background'' Riemannian metric $g_{ \Sigma}$ on $ \Sigma$. We denote by $| \cdot|_{ \Sigma}$ the associated norm on $ T \Sigma$. Since $ \Sigma$ is compact, it follows from \eqref{eq-norm} that for any pair $(q,j)$ there exists a constant $ \varepsilon_{Q} >0$ such that for any $ \eta = w + cY \in T_{(x,r)} (S \Sigma)$, one has
\begin{equation}
\label{eq-norm-compare}
| \eta|^2_{J_Q}  \ge \varepsilon_{Q} r |w|^2_{ \Sigma}.
\end{equation}

\section{The extended maximum principle}
\label{sec-extended-max}
\begin{defn}
A connected non-compact $2n$-dimensional symplectic manifold $(M, \omega)$ is said to be \textbf{modelled on $ \Sigma$ at infinity} if the complement of a compact subset of $M$ is symplectomorphic to part of the symplectisation $ S \Sigma$ of $ \Sigma$. Explicitly, this means there exists a number $0 <  r_M <1$ and a compact domain $  M_* \subset M$ with compact closure such that $ M \setminus  
M_*$ is symplectomorphic to $ S\Sigma(r_M)$: 
$$
\iota : M \setminus  
M_*  \to S \Sigma(r_M), \qquad \iota^* (d\lambda) = \omega.
$$
 In particular, $ \partial M_* \cong \Sigma$, and outside of $ M_*$ the symplectic form $ \omega$ is exact (although this may not be the case on all of $M$.) More generally, a non-compact symplectic manifold is said to be \textbf{convex at infinity} if there exists a closed contact manifold $ \Sigma$ such that $M$ is modelled on $ \Sigma$ at infinity.\end{defn}
 
\begin{rem}
The assumption that $r_M < 1$ is of course, not important, and can always be achieved by rescaling. In particular, this notion is equivalent to the one discussed at the start of the Introduction.
\end{rem} 

Let us now fix a symplectic manifold $(M, \omega)$ which is modelled on $ \Sigma$ at infinity. In all of the following we will suppress the $ \iota$ from our notation. For $ r \ge r_M$ we denote by $ M_r := M \setminus S\Sigma(r)$, and think of $M$ as being the disjoint union $M_r \cup S\Sigma(r)$. Thus $M_* = M_{r_M}$.

\begin{defn}
\label{defn:ofH}
Let $\mathcal{H} \subset C^{\infty}(M \times S^1)$ denote the set of time-dependent Hamiltonians $H_t(z)$ on $M$ with the property that there exists $ r_M < r_H < 1$ such that the restriction of $H$ to $S \Sigma(r_H)$ is, up to a constant, 1-homogeneous in $r$:
\[
  H_t(x,r) = r h_t(x) + c_H, \qquad h_t :\Sigma \to \mathbb{R}, \qquad \forall \,(x,r) \in S \Sigma(r_H).
\]
\end{defn}
Just as with $X_Q$ above, the Hamiltonian vector field $X_{H_t}$ of $H_t$ satisfies
$$ X_{H_t}(x,r) = X_{h_t}(x) - dh_t(x)R(x), \qquad (x,r) \in S \Sigma(r_H).$$
where $X_h$ is the (now time-dependent) vector field on $ \Sigma$ defined by 
\begin{equation}
\label{eq-Xh}
\alpha (X_{h_t}) = h_t, \qquad d \alpha(X_{h_t}, \cdot) = dh_t(R) \alpha - d h_t.
\end{equation}
The vector field $ X_{h_t}$ generates a path $ \varphi_h^t : \Sigma \to \Sigma$ of contactomorphisms isotopic to the identity. We say that $h_t$ is the \textbf{contact Hamiltonian} associated to $ \varphi_h^t$. The \textbf{conformal factor} of $ \varphi_h^t$ is the function $ \kappa_h^t : \Sigma \to \mathbb{R}^+$ defined by 
$$ (\varphi_h^t)^* \alpha = \kappa_h^t  \alpha.$$
If $\kappa_h^t =1$ then we say $ \varphi_h^t$ is a \textbf{strict} contactomorphism. The flow $ \phi_H^t$ of the Hamiltonian vector field of $H_t$ is given by 
\begin{equation}
\label{eq-flow-XQ}
\phi_H^t(x,r) = \left( \varphi_h^t(x), \frac{r}{\kappa_h^t(x)} \right).
\end{equation}
For later use, let us note that if $Q = rq$ and $H = r h$ then the Poisson bracket is given by
\begin{align}
\{ Q, H\} & := dQ(X_H) \nonumber  \\
& = (qdr + r dq)(X_h - dh(R)Y) \nonumber \\
& =  - Q dh(R) + r dq(X_h). \label{eq-QH1}
\end{align}
From \eqref{eq-Xh} we have
\begin{align*}
  dq(X_h) &  = dq(R)h - d \alpha(X_q, X_h) \\
  & = dq(R)h + d \alpha(X_h, X_q) \\
  & = dq(R)h + dh(R)q - dh(X_q),
\end{align*}
and hence we also obtain
\begin{equation}
\label{eq-Poisson}
\{Q,H\} = H dq(R) - r dh(X_q).
\end{equation}
\begin{defn}
Let $ \mathcal{H}_{ \circ} $ denote the set of $H \in \mathcal{H}$ that in addition satisfy the following two conditions:
\begin{enumerate}
\item The contactomorphism $ \varphi_h^1$ has no fixed points.
\item All the fixed points of $ \phi_H^1$ are non-degenerate (i.e. for every fixed point $z \in M$ of $ \phi_H^1$, $1$ is not an eigenvalue of $D \phi_H^1(z):T_z M \to T_z M$).
\end{enumerate}
Note that by (1), any such fixed point is necessarily contained in the compact manifold $M_1$.
\end{defn}

\begin{defn}
\label{defn-JV}
Let $ \mathcal{J}$ denote the set of families $J = (J_t)$ of almost complex structures on $M$ which are compatible with $ \omega$ and which have the property that there exists $  r_M < r_J < 1$ and a smooth family $q_t$ of positive functions on $ \Sigma$ and an almost complex structure $j$ on $ \xi$ such that on $ S \Sigma (r_J)$, $J_t$ agrees with 
the corresponding family $J_{Q_t}$ of almost complex structures of twisted SFT-type.
\end{defn}

Let $\mathcal{L}(M) := C^{\infty}(S^1 ,M)$  denote the free loop space.  Given $J \in \mathcal{J}$ we define an $L^2$ scalar product $\left\llangle  \cdot , \cdot \right \rrangle_J$ on $\mathcal{L}(M)$ by 
\[
  \left\llangle \eta_1, \eta_2 \right\rrangle_J := \int_{S^1} \langle \eta_1(t), \eta_2(t) \rangle_{J_t} \,dt, \qquad v \in \mathcal{L}(M), \quad \eta_1, \eta_2 \in C^{\infty}(S^1, v^*TM),
\]
where $ \langle \cdot, \cdot \rangle_{J_t} := \omega(J_t \cdot, \cdot)$.
Given $H \in \mathcal{H}$  there is a well-defined one-form $\mathfrak{a}_{H}$ on the free loop space $\mathcal{L}(M)$, given for $v \in \mathcal{L}(M)$ by 
\[
  \mathfrak{a}_{H}(v) \cdot \eta = \int_{S^1} \omega_{v(t)}( \eta (t), \dot v (t) - X_{H_t}(v(t)))\,dt,  \qquad \eta \in C^{ \infty}( S^1, v^*TM).
\]
This one-form is typically not exact, but becomes exact when we lift it to an appropriate \textbf{Novikov cover} of $\mathcal{L}(M)$, c.f.  \cite{HoferSalamon1995}. We will not discuss this here, since it is not important as far as the maximum principle is concerned. The kernel of $ \mathfrak{a}_H$ is exactly  the one-periodic solutions of the Hamiltonian system determined by $H$:
\[
  \ker \mathfrak{a}_H = \left\{ v \in \mathcal{L}(M) \mid \dot v = X_H(v) \right\} \cong \mathrm{Fix}(\phi_H^1).
\]
We denote by $ \mathfrak{m}_{H,J}$ the vector field on $ \mathcal{L}(M)$ dual to $ \mathfrak{a}_H$: 
$$ \mathfrak{a}_{H}(v) \cdot \eta = \left\llangle \mathfrak{m}_{H,J}(v) , \eta \right\rrangle_J.$$
Explicitly, 
$$ \mathfrak{m}_{H,J}(v)(t) = J_t(v)( \dot v - X_{H_t}(v(t)).$$
The following lemma will be crucial for our maximum principle. 

\begin{lem}
\label{lem-missingbit}
Suppose $H \in \mathcal{H}_{ \circ}$ and $J \in \mathcal{J}$. There exists exists $ B,\varepsilon >0$ depending on $H$ and $J$ such that if a loop $v : S^1 \to M$ satisfies 
$$ \| \mathfrak{m}_{H,J}(v) \|_J < \varepsilon$$
then $v (S^1) \subset M_B$.
\end{lem}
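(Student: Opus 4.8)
The plan is to argue by contradiction: suppose no such $B$ works for the given $\varepsilon$ (which itself will be chosen during the argument). Then there is a sequence of loops $v_k : S^1 \to M$ with $\|\mathfrak m_{H,J}(v_k)\|_J \to 0$ but $\max_{S^1} r\circ v_k \to \infty$. Write $\eta_k := \mathfrak m_{H,J}(v_k) = J_t(v_k)(\dot v_k - X_{H_t}(v_k))$, so $\dot v_k = X_{H_t}(v_k) - J_t(v_k)\eta_k$ with $\|\eta_k\|_J \to 0$. The first step is a localisation estimate: once $\max r\circ v_k$ is large, I claim the loop $v_k$ must spend a definite amount of time deep in the symplectisation end, say inside $S\Sigma(\rho)$ for a large but fixed $\rho > \max(r_H, r_J)$. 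This follows from \eqref{eq-norm-compare}: we have $\|\dot v_k - X_{H_t}(v_k)\|_{J_t}^2 = \|\eta_k(t)\|_{J_t}^2$, and since $X_{H_t}$ is linear in $r$ on the end while the $J_{Q_t}$-norm grows like $r|w|_\Sigma^2$, a loop that climbs from some bounded height to height $\to\infty$ and back must accumulate length $\int_{S^1}|w_k|_\Sigma\,dt$ that is controlled; feeding this through Cauchy–Schwarz against \eqref{eq-norm-compare} forces $\|\eta_k\|_J$ to stay bounded below unless the loop sits almost entirely at large $r$. So after passing to a subsequence we may assume $v_k(S^1) \subset S\Sigma(\rho)$ entirely, with $\rho\to\infty$.

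The second, central step is to derive a contradiction from the existence of an ``almost-solution'' loop living entirely at large $r$. On $S\Sigma(\rho)$ the Hamiltonian is $H_t = rh_t + c_H$ and $J_t = J_{Q_t}$ is of twisted SFT-type. Consider the function $r\circ v_k : S^1 \to \mathbb R^+$. Using \eqref{eq-components}, \eqref{eq-norm} and the flow formula \eqref{eq-flow-XQ}, I would compute $\frac{d}{dt}\log(r\circ v_k)$ in terms of the components $a_k,b_k$ of $\dot v_k$ relative to the frame $(X_{Q_t},Y,\zeta_{Q_t}(\cdot))$ — here $Q_t = rh_t$ plays the role of the function from Definition \ref{defn:tsft} (this uses $h_t > 0$, which on the relevant end is forced because $H\in\mathcal H$; if $h_t$ is not everywhere positive we restrict to the region where $r$ is large and $h_t$ close to its value where the loop lives — I will need to be careful here, see below). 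The point is that the twisted-SFT compatibility identity \eqref{eq-norm} says that in this frame $J_{Q_t}$ acts as the standard rotation, so the $b$-component of $\dot v_k$, which controls $\frac{d}{dt}(r\circ v_k)/(r\circ v_k)$, equals the $a$-component of $\eta_k$ up to the Hamiltonian term; and the Hamiltonian term contributes exactly $-dh_t(R)$ along the loop, which is the logarithmic derivative of $1/\kappa_h^t$ by \eqref{eq-flow-XQ}. Integrating over $S^1$, the Hamiltonian contribution is $-\oint dh_t(R)\,dt = \log\kappa_h^1(\cdot)^{-1}$ evaluated... and here the closed-loop condition plus the assumption that $\varphi_h^1$ has \emph{no fixed points} (so the time-$1$ contact map genuinely moves every point) is what I expect to exploit: a genuine periodic orbit at infinity would require the loop to close up, which \eqref{eq-flow-XQ} obstructs; an $\varepsilon$-approximate one can be ruled out by a compactness/continuity argument on $\Sigma$, since the ``defect'' of closing up is bounded below uniformly on the compact $\Sigma$ by a constant depending only on $h$ and the background metric.

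Concretely, I would phrase this last part as follows. Project $v_k$ to $\Sigma$ via the symplectisation coordinates, getting loops $x_k : S^1 \to \Sigma$ (reparametrising if necessary). The equation $\dot v_k = X_{H_t}(v_k) - J_t\eta_k$ together with \eqref{eq-norm-compare} and $\|\eta_k\|_J\to 0$ forces, after rescaling in $r$, the $\Sigma$-loops $x_k$ to converge in $C^0$ (and weakly in $W^{1,2}$) to a loop $x_\infty$ satisfying $\dot x_\infty = X_{h_t}(x_\infty)$ — i.e. $x_\infty$ is a periodic orbit of the contact isotopy $\varphi_h^t$, which means $x_\infty(0)$ is a fixed point of $\varphi_h^1$, contradicting condition (1) in the definition of $\mathcal H_\circ$. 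To make the convergence work I need uniform $W^{1,2}$ bounds on $x_k$, which come precisely from \eqref{eq-norm-compare}: $\varepsilon_{Q}\rho\int_{S^1}|w_k|_\Sigma^2 \le \int_{S^1}|\dot v_k|_{J_t}^2 \le 2\int_{S^1}|\eta_k|_{J_t}^2 + 2\int_{S^1}|X_{H_t}(v_k)|_{J_t}^2$, and the second term on $S\Sigma(\rho)$ is $O(\rho)$ (linear Hamiltonian), so dividing by $\varepsilon_Q\rho$ gives a bound independent of $k$; the $r$-direction is handled separately by the logarithmic-derivative computation above, which shows $\log(r\circ v_k)$ has $W^{1,2}$-norm controlled by $\|\eta_k\|_J$ plus $\|dh(R)\|_\infty$, hence bounded, so $r\circ v_k$ cannot in fact run off to infinity while keeping the defect small — and \emph{this} is already the contradiction, making the $\Sigma$-projection argument a clean way to package it.

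The step I expect to be the main obstacle is the case in which $h_t$ changes sign: then $Q_t = rh_t$ is not a legitimate choice in Definition \ref{defn:tsft} (which requires $q$ positive), so the twisted-SFT frame $(X_{Q_t},Y,\zeta_{Q_t})$ degenerates precisely along the hypersurface $\{h_t = 0\}$, and the logarithmic-derivative identity for $r\circ v_k$ breaks down there. I would handle this by not insisting on using $Q = rh$ as the frame-generating function — instead use the actual $q_t$ from the definition of $J\in\mathcal J$ (which \emph{is} positive) to set up the frame, expand $X_{H_t}$ in that frame, and absorb the resulting extra terms (which are bounded on $\Sigma$, uniformly in $r$ after the $1/r$ rescaling built into \eqref{eq-norm-compare}) into the error. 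The delicate bookkeeping is then to check that the sign of the ``bad'' term in $\frac{d}{dt}(r\circ v_k)$ is still controlled — this is exactly the role played by the Aleksandrov-type maximum principle advertised in the abstract, and I expect the honest version of this lemma (as opposed to my contradiction-by-compactness sketch) to run the estimate on $r\circ v$ directly as a differential inequality rather than through a limiting loop. Either way, the two non-negotiable inputs are \eqref{eq-norm-compare} (the metric comparison giving $W^{1,2}$ control) and condition (1) of $\mathcal H_\circ$ (no fixed point of $\varphi_h^1$, giving the uniform lower bound on the closing-up defect).
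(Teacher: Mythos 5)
Your overall strategy matches the paper's (control the radial excursion of the loop via the $Y$-component of the defect $\dot v - X_{H_t}(v)$, and rule out loops sitting entirely at large $r$ via the no-fixed-point condition on $\varphi_h^1$), but the justification of your first step --- the localisation of $v_k$ to $S\Sigma(\rho)$ --- does not work as written. You argue that a loop climbing to large $r$ must ``accumulate length $\int |w_k|_\Sigma$'', which you then control via \eqref{eq-norm-compare}. But \eqref{eq-norm-compare} only bounds the $T\Sigma$-part of a tangent vector; it says nothing about the $Y$-component, and a loop can make an arbitrarily large purely radial excursion with $w \equiv 0$. The correct localisation --- and the way the paper does it --- is exactly your ``logarithmic-derivative computation'': writing $\rho = Q_t(v) = q_t(x)\,r$ with $q_t$ the positive function coming from $J \in \mathcal{J}$, the $Y$-component of $\dot v - X_{H_t}(v)$ equals $(\dot\rho - \{Q,H\}(v))/\rho$, its pointwise contribution to $|\mathfrak{m}_{H,J}(v)|_{J_Q}$ is $|\dot\rho - \{Q,H\}|/\sqrt{\rho}$ by \eqref{eq-norm}, and $\{Q,H\}/Q = (h\,dq(R) - dh(X_q))/q =: m$ is bounded on the compact $\Sigma$. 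An integrating factor $e^{\int m}$ then shows that a crossing of the loop from height $A$ to height $B$ forces $\|\mathfrak{m}_{H,J}(v)\|_J \gtrsim (\delta B - \mathrm{const}\cdot A)/\sqrt{\delta B}$, which is bounded away from $0$ once $B$ is chosen large relative to $A$. This single estimate does the work you try to split between your ``step 1'' and your ``separate handling of the $r$-direction''; it has to be run before, not after, the compactness argument on the $\Sigma$-projection, since it is what entitles you to assume the loop lies entirely at large $r$ in the remaining case.

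Second, the ``main obstacle'' you flag --- $h_t$ changing sign, so that $Q = rh$ is not a legal twisted-SFT generator --- is not an obstacle, because the frame is never built from $h$. The frame $(X_{Q_t}, Y, \zeta_{Q_t})$ comes from the positive function $q_t$ in Definition \ref{defn-JV}, and the Hamiltonian enters only through the Poisson bracket $\{Q,H\}$ of \eqref{eq-QH1}--\eqref{eq-Poisson}, which equals $m\rho$ with $m$ bounded regardless of the sign of $h$. No Aleksandrov-type argument is needed for this lemma (that machinery is reserved for Theorem \ref{thm:maxprin}, where one controls a two-dimensional domain rather than a loop). With these two corrections your outline collapses to the paper's proof: (i) if $v$ lies entirely in $S\Sigma(A)$, the defect is bounded below by \eqref{eq-norm-compare} together with the uniform lower bound $\int_{S^1} |\dot x - X_{h_t}(x)|^2_\Sigma\,dt \ge \varepsilon_0$ coming from the absence of fixed points of $\varphi_h^1$; (ii) if $v$ crosses from height $A$ to height $B$, the integrating-factor estimate bounds the defect below; (iii) otherwise $v(S^1) \subset M_B$, which is the claim.
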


\begin{proof}
By assumption on $S \Sigma(1)$ we can write $J_t = J_{Q_t}$ and $H_t = rh_t +c_H$. Set 
$$ \delta := \min_{(x,t) \in \Sigma \times S^1} q_t(x) , \qquad \Delta := \max_{(x,t) \in \Sigma \times S^1 } q_t(x),$$
so that $ 0 < \delta \le \Delta.$
Define
\begin{equation}
\label{eq-f}
m : [0,1] \times \Sigma \to \mathbb{R}, \qquad m(t,x) := \frac{ h_t(x)dq_t(x)R(x) - dh_t(x)X_{q_t}(x)}{q_t(x)}.
\end{equation}
By compactness, there exists $  c \le C$ such that 
$$ c \le m(t,x) \le C$$
for all $ (t,x)$.
Choose $ A,B > 1$ such that 
\begin{equation}
\label{eq-A-B-1}
B > \frac{ 1 + \Delta e^C  A}{\delta e^c};
\end{equation}
note in particular this implies $ B > \frac{\Delta}{\delta}A$.
Next, recall fixed background norm $ | \cdot|_{ \Sigma}$ defined just before \eqref{eq-norm-compare}. Since $ \varphi_h^1$ has no fixed points and $ \Sigma$ is compact, there exists $ \varepsilon_0$ such that 
$$ \int_{S^1} | \dot x - X_{h_t}(x)|^2_{ \Sigma}\,dt \ge \varepsilon_0$$
for any loop $x$ in $ \Sigma$. 
If $v(S^1)$ is entirely contained in $S \Sigma(A)$ then writing $v(t) = (x(t),r(t))$ and using \eqref{eq-norm-compare},
$$ \| \mathfrak{m}_{H,J}(v) \|_J^2 \ge \int_{S^1} \varepsilon_{Q_t} r(t) | \dot x(t) - X_{h_t}(x(t))|^2_{ \Sigma} \,dt \ge \varepsilon_A: = A \varepsilon_0 \max_{ t \in S^1} \varepsilon_{Q_t}.$$
If instead $v(S^1)$ is entirely contained in $M_B$ then since $M_B$ has compact closure and $H$ only has non-degenerate one-periodic orbits there exists $ \varepsilon_B$ such that if 
$$ \| \mathfrak{m}_{H,J}(v) \|_J \le \varepsilon_B$$
then $v(S^1)$ is contained in a small neighbourhood of the periodic orbits of $H$ (cf. \cite{Salamon1999}), and hence also in
$ M_1$. 
Set $$ \varepsilon_{A,B} := \min\{ \sqrt{ \varepsilon_A}, \varepsilon_B\}.$$ 
Then if $ \| \mathfrak{m}_{H,J}(v) \|_J < \varepsilon_{A,B}$ then there must exist an interval $[a,b] \subset S^1$ such that $ r(a) =A$ and $r(b) = B$, and such that $ r(t) \in (A,B)$ for all $ t \in (A,B)$. Let 
$$ \rho(t) := Q_t(v(t)) = q_t(x(t))r(t).$$
Then $ \rho(a) \le A \Delta$ and $ \rho(b) \ge \delta B$. Thus by \eqref{eq-A-B-1} there exists $[a',b'] \subset [a,b]$ such that 
$$ \rho(a') =  \Delta A, \qquad \rho(b') = \delta B.$$
Abbreviate $m(t) = m(t,x(t))$, where $m$ was defined in \eqref{eq-f}. We now look at the $Y$-component of $ \dot v - X_{H_t}(v)$. Using \eqref{eq-components} and \eqref{eq-Poisson} this is 
$$ \frac{dQ_t(v(t)) ( \dot v - X_{H_t}(v))}{Q_t(v(t)} = \frac{\dot \rho - \{Q, H\}}{\rho} = \frac{\dot \rho - m \rho}{\rho} .$$
Thus we can estimate 
$$ \| \mathfrak{m}_{H,J}(v)\|_J \ge \int_{a'}^{b'} \frac{1}{\sqrt{ \rho}} | \dot \rho +  m \rho| \,dt \ge \frac{1}{ \sqrt{\delta B}} \int_{a'}^{b'} | \dot  \rho + m \rho| \,dt$$
Let $$ n(t) := \exp \left( \int_0^t m(s)\,ds \right),$$
and
$$ s(t) := n(t)\rho(t),$$
so that $ \dot s = \dot n \rho + n \dot \rho  = n ( \dot \rho + m \rho)$.
Thus 
\begin{align*}
\| \mathfrak{m}_{H,J}(v)\|_J & \ge \frac{1}{e^C \sqrt{ \delta B}} \int_{a'}^{b'} | \dot s |\,dt \\ 
& =\frac{1}{e^C \sqrt{ \delta B}}(s(b') - s(a')) \\ 
& \ge \frac{1}{e^C \sqrt{ \delta B}}( e^c  \delta B - e^C \Delta A) \\
& \ge \frac{1}{e^C \sqrt{ \delta B}},
\end{align*}
where the last line used \eqref{eq-A-B-1}.
The lemma follows with $B$ as specified and
$$ \varepsilon := \min \left\{ \varepsilon_{A,B}, \frac{1}{e^C \sqrt{ \delta B}} \right\}.$$
This completes the proof.
\end{proof}

\begin{defn}
Let us say that a homotopy  $\mathbf{H} = (H^s) \subset  \mathcal{H} $, $s \in \mathbb{R}$ is a \textbf{continuation homotopy} of Hamiltonians if:
\begin{enumerate}
  \item $\mathbf{H} $ is asymptotically constant, i.e. 
  \[
  \begin{split}
    H^s = H^0,& \qquad \forall \, s \le 0\\
   H^s = H^1,& \qquad \forall \, s \ge 1.
  \end{split}
   \]
\item  The asymptotes $H^0,H^1$ both belong to $\mathcal{H}_{\circ}$.
  \item  $\mathbf{H} $ is monotonically increasing in $s$:
  \[
  \frac{\partial H^s}{\partial s} \ge 0.
  \]
\end{enumerate}
\end{defn}

\begin{defn}
Let us say that a homotopy  $\mathbf{J} = (J^s) \subset  \mathcal{J} $, $s \in \mathbb{R}$ is a \textbf{continuation homotopy} of almost complex structures if $J^s$ is asymptotically constant, i.e. 
\[
  \begin{split}
    J^s = J^0,& \qquad \forall \, s \le 0\\
   J^s = J^1,& \qquad \forall \, s \ge 1.
  \end{split}
   \]
\end{defn}

\begin{defn}
Given continuation homotopies $\mathbf{H}$ and $\mathbf{J}$ and a real number $E \ge 0$, we denote by
\[
  \mathcal{M}_{\mathbf{H}, \mathbf{J}}(E)
\]
the space of all smooth maps $u : \mathbb{R} \times S^1 \to M$ that are negative flow lines of the the $s$-dependent vector field $ \mathfrak{m}_{H^s,J^s}$:
\begin{equation}
\label{eq-neg-flow}
\partial_s u + \mathfrak{m}_{H^s,J^s}(u(s, \cdot)) = 0.
\end{equation}
and which have \textbf{energy}
\[
  \mathbb{E}_{\mathbf{J}}(u) := \int_{-\infty}^{\infty} \left\llangle \partial_s u, \partial_s u \right\rrangle_{J^s} \,ds \le E.
\]
\end{defn}
Explicitly, \eqref{eq-neg-flow} means that $u$ solves the partial differential equation:
\[
  \partial_s u + J_t(u) \partial_t u = J_t(u)X_{H_t^s}(u) .
\]

The main result of this section is the following \emph{a priori} $L^{\infty}$ bound on elements of $  \mathcal{M}_{\mathbf{H}, \mathbf{J}}(E)$. This result was stated in the Introduction as Theorem \ref{prop:emp}.
\begin{theorem}[The Extendend Maximum Principle]
\label{thm:maxprin}
Suppose we are given continuation homotopies $\mathbf{H}$ and $\mathbf{J}$ and $E \ge 0$. Then there exists a constant $r_0 =r_0(E, \mathbf{H}, \mathbf{J}) >0$ such that 
\[
 u(\mathbb{R} \times S^1) \cap S \Sigma(r_0)= \emptyset, \qquad \forall \,  u \in \mathcal{M}_{\mathbf{H}, \mathbf{J}}(E) .
\]
\end{theorem}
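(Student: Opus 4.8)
The strategy is to combine Lemma \ref{lem-missingbit}, applied to the autonomous asymptotes $H^0,H^1$, with a genuine second-order maximum principle for the function $\rho:=Q_t^s\circ u$ on the portion of the cylinder that is mapped into the symplectisation. Fix once and for all $\bar r\in(r_M,1)$ small enough that, for \emph{every} $s$, the Hamiltonian $H^s$ has the form $rh_t^s+c$ and $J^s=J_{Q_t^s}$ on $S\Sigma(\bar r)$ (possible since $\mathbf H,\mathbf J$ are asymptotically constant and depend continuously on $s\in[0,1]$), and put $\delta:=\min q_t^s$, $\Delta:=\max q_t^s$, the extrema being over $\Sigma\times S^1\times\mathbb R$.

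\emph{Step 1: barriers from the energy bound.} Since $u$ solves \eqref{eq-neg-flow} we have $\partial_s u(s,\cdot)=-\mathfrak m_{H^s,J^s}(u(s,\cdot))$, so $\mathbb E_{\mathbf J}(u)=\int_{-\infty}^{\infty}\|\mathfrak m_{H^s,J^s}(u(s,\cdot))\|_{J^s}^2\,ds\le E$. On $(-\infty,0]$ the equation is autonomous with asymptote $H^0\in\mathcal H_\circ$, so Lemma \ref{lem-missingbit} produces $\varepsilon_-,B_->1$ for which the set $\{s\le 0:\|\mathfrak m_{H^0,J^0}(u(s,\cdot))\|_{J^0}\ge\varepsilon_-\}$ has measure at most $E/\varepsilon_-^2$; the same holds on $[1,\infty)$ with $H^1$, giving $\varepsilon_+,B_+$. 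With $B:=\max\{B_-,B_+\}>1$ and $T:=1+E/\min\{\varepsilon_-,\varepsilon_+\}^2$ it follows that for every $s_0\le 0$ there is $s_1\in[s_0-T,s_0]$ with $u(s_1,\cdot)\subset M_B$, and for every $s_0\ge 1$ there is $s_1\in[s_0,s_0+T]$ with $u(s_1,\cdot)\subset M_B$. On such a barrier slice $r\circ u\le B$, hence $\rho\le\Delta B$.

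\emph{Step 2: the differential inequality.} This is the heart of the proof. A short computation from \eqref{eq-components} and \eqref{eq-twisted-SFT} shows $dQ_t^s\circ J_{Q_t^s}=-\lambda$, while \eqref{eq-f}--\eqref{eq-Poisson} give $\{Q^s,H^s\}=m^sQ^s$ with $|m^s|\le C$ uniformly (compactness of $\Sigma\times S^1$ and of the homotopy parameter). Using these identities together with the compatibility formula \eqref{eq-norm} and the monotonicity $\partial_sH^s\ge 0$, the explicit second-derivative computation for a solution $u$ of the $s$-dependent twisted-SFT Floer equation shows that on $u^{-1}(S\Sigma(\bar r))$ the function $\rho$ is a subsolution of a linear elliptic operator with uniformly bounded coefficients,
\[
\partial_s^2\rho+\partial_t^2\rho+\langle\mathfrak b,\nabla\rho\rangle\ \ge\ 0,\qquad \|\mathfrak b\|_\infty\le C',
\]
the contribution of $\partial_s$ of the Hamiltonian entering with the favourable sign precisely because of monotonicity. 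For $q\equiv 1$ and $h$ invariant under the Reeb flow this reduces to the classical subharmonicity of $r\circ u$; the new point is that it survives for arbitrary $q$ and arbitrary $h$.

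\emph{Step 3: running the maximum principle.} Fix $\kappa>\Delta B$ and set $\Omega:=\{(s,t):u(s,t)\in S\Sigma(\bar r),\ \rho(s,t)>\kappa\}$, an open subset of $\mathbb R\times S^1$. On $\partial\Omega$ either $\rho=\kappa$, or $r\circ u=\bar r$ and then $\rho\le\Delta\bar r<\kappa$; in both cases $\rho\le\kappa$ on $\partial\Omega$. Every connected component $\Omega_0$ of $\Omega$ is bounded: its projection to the $s$-axis is an interval $I$, and on $\Omega_0$ one has $r\circ u\ge\rho/\Delta>\kappa/\Delta\ge B$, so $u(s,\cdot)\not\subset M_B$ for every $s\in I$; by Step 1 such an interval cannot be unbounded below (it would necessarily contain a barrier slice arising from $(-\infty,0]$) nor above (likewise from $[1,\infty)$). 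Hence $\overline{\Omega_0}$ is compact, and the weak maximum principle applied to the operator of Step 2 gives $\sup_{\Omega_0}\rho=\max_{\partial\Omega_0}\rho\le\kappa$; therefore $\Omega=\emptyset$. Consequently $\rho\le\kappa$ wherever $u\in S\Sigma(\bar r)$, so $r\circ u\le\kappa/\delta$ there and $r\circ u<\bar r$ elsewhere, and the theorem follows with any $r_0>\max\{\bar r,\kappa/\delta\}$.

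The step I expect to be the genuine obstacle is Step 2: carrying out the second-derivative (energy-identity type) computation for the twisted-SFT Floer equation and checking that every error term is either nonnegative or first-order with uniformly bounded coefficients, in particular that the $\partial_sH^s$ contribution has the sign dictated by monotonicity; Steps 1 and 3 are then essentially soft.
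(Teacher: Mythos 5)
Your Steps 1 and 3 are sound and match the paper's strategy (Step 1 is essentially Lemma \ref{lem-omega-bounded}), but Step 2 contains the genuine gap, and it is exactly the point where the theorem is hard. The second-derivative computation for $\rho=Q^s_t\circ u$ does not produce a subsolution inequality with uniformly bounded coefficients and vanishing right-hand side. What it produces (see \eqref{eq:Laprho}) is
\[
\Delta \rho \;=\; g(w)\,\partial_t\rho+f(w)\,\partial_s\rho+\bigl(\partial_t (g(w))+\partial_s (f(w))\bigr)\rho+\partial_s H(u)+|\partial_s u|^2 ,
\]
and the troublesome term is $\bigl(\partial_t(g(w))+\partial_s(f(w))\bigr)\rho$: its coefficient involves the first derivatives of the $\Sigma$-component $w$ of $u$, it has no sign, and it is \emph{not} pointwise bounded. (It vanishes identically precisely in the classical case $q\equiv 1$, $dh(R)=0$, which is why the old argument works there and why your reduction "for $q\equiv 1$ and Reeb-invariant $h$ this reduces to subharmonicity" does not generalise.) One can try to absorb it using part of $|\partial_s u|^2$ via \eqref{eq-norm-compare} and Cauchy--Schwarz, but what survives is a zeroth-order term $+C\rho$ with the \emph{wrong} sign for the weak maximum principle, on components $\Omega$ whose width is only bounded in terms of $E$ (not small), so the narrow-domain trick is unavailable. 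In short, the operator you write down in Step 2 is not the one $\rho$ satisfies, and with the operator it does satisfy the classical pointwise maximum principle of Step 3 cannot be run.

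The paper's way around this is the actual content of the proof: pass to $\mu=\log\rho$, for which Proposition \ref{prop:laplacian_of_mu} gives $\Delta\mu+k(w)\cdot\nabla\mu\ge \partial_s(f(w))+\partial_t(g(w))$ with bounded first-order coefficients but an inhomogeneous term that is only controlled in $L^2$; then invoke the Aleksandrov (integral) version of the maximum principle, whose conclusion involves the $L^2$-norm of the right-hand side and the diameter of $\Omega$ rather than a pointwise bound. The two remaining inputs are precisely the ones your proposal skips: the diameter bound on components of $u^{-1}(S\Sigma(T))$ (your Step 1, i.e.\ Lemma \ref{lem-omega-bounded}), and an $L^2(\Omega)$ bound on $\nabla w$, which is extracted from the energy bound via \eqref{eq-norm-compare} together with $\partial_t u=J\partial_s u+X_H(u)$ and the fact that $|X_H(u)|^2/r$ is bounded. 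If you want to complete your argument you must either supply a genuinely new pointwise estimate replacing Step 2 (which the authors explicitly state they do not know how to do), or switch to the integral maximum principle as above.
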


The proof will take some time, and we will proceed in several stages. Let us write $J_t = J_{Q^s_t}$ and $H_t = rh^t_s + c_{H^s}$ on $S \Sigma(1)$, where $h_t^s : \Sigma \to \mathbb{R}$ is a family of smooth functions such that $ \frac{ \partial h^t_s}{ \partial s} \ge 0$ and $q_t^s : \Sigma \to \mathbb{R}^+$ is a family of smooth positive functions. Both $h_t^s$ and $q_t^s$ only depend on $s$ in $[0,1]$.

First, we prove that a flow line can only spend a finite time outside of a compact set.

\begin{lem}
\label{lem-omega-bounded}
There exists $ T > 1$ and $L > 0$ such that if $ u \in \mathcal{M}_{ \mathbf{H}, \mathbf{J}}(E)$ and $ \Omega$ is a connected component of $ u^{-1}(S \Sigma(T))$ then $ \Omega$ is contained in $I \times S^1$ for $I \subset \mathbb{R}$ an interval of length at most $ L$.
\end{lem}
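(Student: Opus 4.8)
\noindent
The plan is to upgrade Lemma~\ref{lem-missingbit} to a two-dimensional statement by exploiting the energy bound. The key observation is that along any solution $u$ of \eqref{eq-neg-flow} the energy density coincides with the squared norm of $\mathfrak{m}$: since $\partial_s u(s,\cdot) = -\,\mathfrak{m}_{H^s,J^s}(u(s,\cdot))$, we have
\[
\mathbb{E}_{\mathbf{J}}(u) = \int_{-\infty}^{\infty} \bigl\| \mathfrak{m}_{H^s,J^s}(u(s,\cdot)) \bigr\|_{J^s}^2 \, ds \le E .
\]

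\noindent
First I would apply Lemma~\ref{lem-missingbit} to the two asymptotes $(H^0,J^0)$ and $(H^1,J^1)$, which lie in $\mathcal{H}_{\circ}\times\mathcal{J}$ by the definition of a continuation homotopy, obtaining constants $B_i,\varepsilon_i>0$ for $i=0,1$; enlarging the $B_i$ if necessary we may assume $B_i>1$. Set $T:=\max\{B_0,B_1\}>1$ and $\varepsilon':=\min\{\varepsilon_0,\varepsilon_1\}$. Since $H^s$ and $J^s$ are constant for $s\le 0$ and for $s\ge 1$, the contrapositive of Lemma~\ref{lem-missingbit}, combined with the inclusion $S\Sigma(T)\subseteq S\Sigma(B_i)$, yields: whenever $s\notin(0,1)$ and the loop $u(s,\cdot)$ meets $S\Sigma(T)$, one has $\bigl\|\mathfrak{m}_{H^s,J^s}(u(s,\cdot))\bigr\|_{J^s}\ge\varepsilon'$, and hence the energy density of $u$ at $s$ is at least $(\varepsilon')^2$.

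\noindent
Now let $\Omega$ be a connected component of the open set $u^{-1}(S\Sigma(T))$, and let $I\subset\mathbb{R}$ be the image of $\Omega$ under the projection to the $s$-coordinate. Since $\Omega$ is connected, $I$ is an interval, and $\Omega\subset I\times S^1$ automatically, so it suffices to bound $|I|$. For every $s\in I$ the loop $u(s,\cdot)$ meets $S\Sigma(T)$, so by the previous step the energy density is $\ge(\varepsilon')^2$ on the set $I\cap\bigl((-\infty,0]\cup[1,\infty)\bigr)$; integrating and using $\mathbb{E}_{\mathbf{J}}(u)\le E$ gives $|I\cap(-\infty,0]|\le E/(\varepsilon')^2$ and $|I\cap[1,\infty)|\le E/(\varepsilon')^2$. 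Since $|I\cap[0,1]|\le 1$ trivially, we conclude $|I|\le L:=1+2E/(\varepsilon')^2$, which is the assertion.

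\noindent
There is no essential difficulty in this argument; the only point requiring care is that Lemma~\ref{lem-missingbit} is available only at the endpoints $s=0,1$, because a continuation homotopy merely requires $H^s\in\mathcal{H}$ (rather than $H^s\in\mathcal{H}_{\circ}$) for $s\in(0,1)$. This is why the compact parameter window $[0,1]$ must be handled separately, which is harmless since it contributes only the additive constant $1$ to $L$.
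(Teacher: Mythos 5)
Your proof is correct and follows essentially the same route as the paper: apply Lemma \ref{lem-missingbit} to the two asymptotes, use the energy identity $\mathbb{E}_{\mathbf{J}}(u)=\int\|\mathfrak{m}_{H^s,J^s}(u(s,\cdot))\|_{J^s}^2\,ds$ to bound the measure of the parameter values outside $[0,1]$ where the loop meets $S\Sigma(T)$, and absorb the window $[0,1]$ into the additive constant $1$ in $L$. If anything, your phrasing via the contrapositive (``the loop \emph{meets} $S\Sigma(T)$'' rather than ``is contained in'') is slightly more careful than the paper's wording.
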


\begin{proof}
By Lemma \ref{lem-missingbit} there exists $ \varepsilon_i , B_i >0$ for $i = 0,1$ such that if a loop $v$ satisfies 
$$ \| \mathfrak{m}_{H^i, J^i}(v) \|_{J^i} < \varepsilon_i$$
for $i = 0,1$ then $v \subset M_{B_i}$. Let $ T := \max B_i$ and let $ \varepsilon = \min \varepsilon_i$.
Thus if $ u(s, \cdot) \subset S \Sigma(T)$ for all $s \in [a, b]$, where either $ [a,b] \subset ( - \infty, 0]$ of $[a,b] \subset [1, \infty)$, then we have 
$$ E = \int_{ \mathbb{R}} \| \partial_s u( s, \cdot) \|^2_{J^s} \,ds \ge |b-a| \varepsilon^2,$$
and hence $ |b-a| \le E \varepsilon^{-2}$.
Thus if $ \Omega $ is a connected component of $ u^{-1}(S \Sigma(T))$, decomposing $ \Omega$ as  
$$ \Omega = \Omega^- \cup \Omega_0 \cup \Omega^+,$$
where $ \Omega^- = \Omega \cap (- \infty ,0]$, $ \Omega_0 = \Omega \cup [0,1]$, and $ \Omega^+ = \Omega \cap [1, \infty)$ (some of which may be empty), we see that $ \Omega$ has length at most 
$$E \varepsilon^{-2} + 1 + E \varepsilon^{-2}.$$
Thus the lemma follows with $L = 2E \varepsilon^{-2}+1$.
\end{proof}
Let now $ \Omega$ denote a connected component of $ u^{-1}(S \Sigma(T))$. Write 
$$ u|_{ \Omega} = ( w , r),$$
so that $ w : \Omega \to \Sigma$ and $ r : \Omega \to (T, \infty)$. Our goal is to find a constant $ r_0 $ depending on $ \mathbf{H}, \mathbf{J}$ and $E$ (but not on $u$!) such that 
$$ \sup_{(s,t) \in \Omega}r(s,t) \le r_0.$$
Consider the function 
$$ \rho(s,t) :=Q_t^s(u(s,t)) =  q^s_t(w(s,t)) r(s,t).$$
It is also convenient to define the following two families of functions:

$$ f_t^s: \Sigma \to \mathbb{R}, \qquad g_t^s : \Sigma \to \mathbb{R},$$
where
$$f_t^s(x) :=\frac{\partial_s q_t^s (x)}{ q_t^s(x)},$$
and 
$$ g_t^s (x) := \frac{\partial_t q_t^s (x)}{ q_t^s(x)} + \left( \frac{dq_t^s(x)X_{h_t^s}(x))}{q_t^s(x)} - dh_t^s(x)(R(x)) \right).$$
Finally let $k_t^s$ denote the vector valued function
$$ k_t^s : \Sigma \to \mathbb{R}^2, \qquad k_t^s = (f^s_t, g^s_t).$$
In the following to keep the notation free from clutter we will drop the $s$ superscript and the $t$ subscript wherever possible. The key to our argument is the following elliptic differential inequality for $\log \rho$.
\begin{prop}
\label{prop:laplacian_of_mu}
Define
\[
  \mu : \Omega \to \mathbb{R}, \qquad \mu(s,t) := \log \rho(s,t). 
\]
Then $\mu$ satisfies the second order elliptic differential inequality
\[
  \Delta \mu  + k(w) \cdot \nabla \mu \ge \nabla k(w) \cdot w + k (w) \cdot \nabla w .
\]
\end{prop}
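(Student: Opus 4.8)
The plan is to compute the Laplacian (with respect to the standard flat metric on the cylinder $\mathbb{R}\times S^1$) of $\mu=\log\rho$ directly from the Floer equation, treating the $s$- and $t$-derivatives separately and then combining them. First I would write $\rho = Q^s_t(u(s,t))$ and differentiate: $\partial_s\mu = \frac{1}{\rho}\big(\partial_s Q^s_t|_u + dQ^s_t(u)\,\partial_s u\big)$ and $\partial_t\mu = \frac{1}{\rho}\big(\partial_t Q^s_t|_u + dQ^s_t(u)\,\partial_t u\big)$, where the ``$\partial_s Q$'' and ``$\partial_t Q$'' terms record the explicit $(s,t)$-dependence of the family $Q^s_t$ and contribute the $f$ and (part of the) $g$ terms via $f=\partial_s q/q$, $\partial_t q/q$. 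The Floer equation \eqref{eq-neg-flow} in the form $\partial_s u + J_t(u)\partial_t u = J_t(u)X_{H^s_t}(u)$ lets me substitute $\partial_s u$ and hence express $\partial_s\mu$ in terms of $\partial_t\mu$, the Poisson bracket $\{Q^s_t,H^s_t\}$ (which by \eqref{eq-Poisson} and the definition of $m$/$g$ produces the remaining Reeb and $X_h$ terms), and the $J_Q$-compatibility. The key structural input is \eqref{eq-components}/\eqref{eq-norm}: for a twisted SFT-type $J_Q$, the pair $\big(\lambda(\eta)/Q,\ dQ(\eta)/Q\big)$ are precisely the $X_Q$- and $Y$-coefficients of $\eta$, and $J_Q$ rotates these two coefficients into each other while acting as $j$ on the $\xi$-part; this is what converts the $\partial_t u$-term of $\partial_s\mu$ into the $\partial_s(\text{something})$-structure needed, up to lower-order terms.

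The second step is to take another derivative and assemble the second-order operator. The cleanest route is: from the first-order identities, obtain an equation of the shape $\partial_s\mu = -\partial_t(\text{harmonic-conjugate-like quantity}) + (\text{zeroth order in }k,w)$ and $\partial_t\mu = \partial_s(\cdots) + (\cdots)$, i.e. a perturbed Cauchy--Riemann system for $(\mu,\nu)$ where $\nu$ is an auxiliary function (the ``angular'' component $\lambda(\partial_s u)/Q$ or similar). Applying $\partial_s$ to the first and $\partial_t$ to the second and adding kills the cross terms and yields $\Delta\mu = \partial_s(\cdots)+\partial_t(\cdots)$; the right-hand side, after using the first-order relations again to eliminate $\nu$ and its derivatives, reorganises into $-k(w)\cdot\nabla\mu + \nabla k(w)\cdot w + k(w)\cdot\nabla w$ plus a manifestly nonnegative term (coming from $|\text{something}|^2_{J_Q}\ge 0$, exactly as the term $rd\alpha(jv,v')$ and the $Q(aa'+bb')$ term appear nonnegatively in \eqref{eq-norm}). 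That nonnegative remainder is what upgrades the identity to the claimed inequality. Here $\nabla k(w)\cdot w$ is shorthand for the chain-rule contribution $\sum (\partial_t k)(w) + \ldots$ arising because $k^s_t$ depends explicitly on $(s,t)$ as well as through $w$; I would be careful to match the paper's bookkeeping convention for this term precisely (it is really $(\partial_s f + \partial_t g)(w(s,t))$-type explicit-derivative terms plus $dk(w)[\nabla w]$).

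The main obstacle is the bookkeeping: there are three sources of $s,t$-dependence — the explicit dependence of $q^s_t$ and $h^s_t$ on $(s,t)$, the dependence through $w(s,t)\in\Sigma$, and the dependence through $r(s,t)$ — and the twisted SFT-type $J_Q$ mixes the $X_Q$ and $Y$ directions in a way that entangles the $q$-dependence with the Reeb direction. Getting every term of $g^s_t = \partial_tq/q + (dq(X_h)/q - dh(R))$ to appear with the correct sign and coefficient requires carefully re-deriving the analogue of \eqref{eq-XbQ}, \eqref{eq-QH1}–\eqref{eq-Poisson} for the time-dependent family and then tracking how $J_{Q_t}$ acts in \eqref{eq-twisted-SFT}. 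The genuinely analytic content — that the leftover quadratic term is nonnegative — is immediate from compatibility of $J_Q$ with $d\lambda$ (equation \eqref{eq-norm}); the real work is algebraic. I would organise the computation by first doing the strict/time-independent case $q\equiv 1$ (where $X_Q=R$ and everything reduces to the classical subharmonicity of $r$) to fix notation, then perturb, so that at each stage the new terms are visibly the ones producing $k$ and $\nabla k$.
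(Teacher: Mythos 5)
Your outline matches the paper's proof in all essentials: the first-order identities for $\partial_s\rho$ and $\partial_t\rho$ coming from the Floer equation together with \eqref{eq-QH1}, a $dd^c$-type computation in which $u^*\omega$ produces a $-|\partial_s u|^2_{J_Q}$ term in $\Delta\rho$, and then the lower bound of $|\partial_s u|^2_{J_Q}$ by its projection onto the $(X_Q,Y)$-plane --- which is precisely your ``discard the nonnegative $\xi$-component of \eqref{eq-norm}'' step --- to absorb the $\tfrac{1}{\rho^2}|\nabla\rho|^2$ term arising when passing from $\rho$ to $\mu=\log\rho$ and to generate the $k(w)\cdot\nabla\mu$ cross terms. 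One caveat: compatibility of $J_Q$ is not the only source of the inequality; the term $\partial_s\big(H^s(u)\big)$ contributes $(\partial_s H^s)(u)$ to $\Delta\rho$ after the $dH(\partial_s u)$ part cancels against $u^*\omega$, and discarding it uses the monotonicity $\partial_s H^s\ge 0$ built into the definition of a continuation homotopy --- this hypothesis should be invoked explicitly rather than folded into ``lower-order terms.''
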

The proof of Proposition \ref{prop:laplacian_of_mu} is a variation on a  standard computation, see for instance \cite{Seidel2006} or \cite{AbbondandoloSchwarz2009}). We note however that Proposition \ref{prop:laplacian_of_mu} is not enough to apply the standard maximum principle, since there is no reason why the term on the right-hand side should be uniformly pointwise bounded.

\begin{proof}
On $ \Omega$ we have
\begin{align*}
\partial_s \rho & = \partial_s q (w) + dQ(u) \partial_s u \\
& = \partial_s q (w)r + dQ(u)( -J(u)( \partial_t u - X_H(u)) \\
& = \frac{\partial_s q (w)}{ q(w)} \rho - \lambda( \partial_t u) - \lambda(X_H(u)) \\
& = \frac{\partial_s q (w)}{ q(w)} \rho - \lambda( \partial_t u) + H(u).
\end{align*}
Similarly
\begin{align*}
\partial_t \rho & = \partial_t q (w) r + dQ(u) \partial_t u \\
& = \frac{\partial_t q (w)}{ q(w)} \rho + dQ(u) ( J(u) \partial_s u + X_H(u)) \\
& = \frac{\partial_t q (w)}{ q(w)} \rho + \lambda( \partial_s u) + \{ Q, H\}(u) \\
& = \frac{\partial_t q (w)}{ q(w)} \rho + \lambda( \partial_s u) + \left( \frac{dq(X_h(w))}{q(w)} - dh(R(w)) \right) \rho,
\end{align*}
where the last line used \eqref{eq-QH1}.Thus 
$$
dd^c \rho  = u^* \omega - \Big(  g(w) \partial_t \rho + f(w) \partial_s \rho + \big( \partial_t (g(w)) + \partial_s (f(w)) \big) \rho + \partial_s (H(u))\Big) ds \wedge dt.
$$
However
\[
\begin{split}
  u^*\omega & = \omega(\partial_s u, \partial_t u) ds \wedge dt \\
  &\omega\big( \partial_s u, J (u)\partial_s u  + X_H(u)\big) ds \wedge dt\\
& \Big(   - | \partial_s u|^2  + dH(u)(\partial_s u) \Big) ds \wedge dt
  \end{split}
\]
which means we can alternatively write
\[
   dd^c \rho =-  \Big( | \partial_s u|^2 +  g(w) \partial_t \rho + f(w) \partial_s \rho + \Big( \partial_t (g(w)) + \partial_s (f(w)) \Big) \rho + \partial_sH (u)\Big)ds \wedge dt.
\]
Since $d d^c \rho = - \Delta \rho ds \wedge dt$ we obtain
\begin{equation}
\label{eq:Laprho}
  \Delta \rho =    g(w) \partial_t \rho + f(w) \partial_s \rho + \Big( \partial_t (g(w)) + \partial_s (f(w)) \Big) \rho + \partial_sH (u) + | \partial_s u|^2 .
\end{equation}
Next, denoting by $ \langle  \cdot , \cdot \rangle$ the metric $ d \lambda(J_Q \cdot, \cdot)$, we have
\[
\begin{split}
  \left\langle \partial_s u , X_Q \right\rangle & = \left\langle -J(u)( \partial_t u - X_H(u) ), X_Q \right\rangle \\
  & = \left\langle  \partial_t u ,Y \right\rangle -  \{Q, H\}(u) \\
  & = \langle \frac{\partial_t r}{r} Y,Y \rangle - \{Q, H\}(u) \\
  & = \partial_t \rho  - g(w) \rho 
\end{split}
\]
where we used \eqref{eq-norm} and \eqref{eq-QH1}. Similarly 
\[
  \left\langle \partial_s u ,Y \right\rangle = \partial_s \rho  - f(w) \rho.
\]
The norm of $\partial_s u$ can be estimated from below by its projection onto the $(X_Q,Y)$-plane, which gives us
\begin{align*}
| \partial_s u |^2  & \ge \frac{1}{\rho}  \left\langle \partial_s u, X_Q \right\rangle^2 + \frac{1}{\rho} \left\langle  \partial_s u, Y \right\rangle^2 \\
& = \frac{1}{\rho} | \nabla \rho |^2 + \rho \Big( g(w)^2 + f(w)^2 \Big) - 2 \Big( \partial_t \rho \cdot g(w) +  \partial_s \rho \cdot f(w) \Big)  \\
& \ge \frac{1}{\rho} | \nabla \rho |^2 - 2 \Big( \partial_t \rho \cdot g(w) +  \partial_s \rho \cdot f(w) \Big).
\end{align*}
From the identity
\[
  \Delta \mu = \frac{1}{\rho} \Delta \rho - \frac{1}{\rho^2} |\nabla \rho|^2, 
\]
and using $ \partial_s H \ge 0$, we obtain from \eqref{eq:Laprho} that
\[
  \Delta \mu  + k(w) \cdot \nabla \mu  \ge \Big( \partial_s (f(w)) + \partial_t (g(w)) ) \Big),
\]
which is what we wanted to prove.
 \end{proof} 

 We can now complete the proof of Theorem \ref{thm:maxprin}.

\begin{proof}[Proof of Theorem \ref{thm:maxprin}]
 The Aleksandrov integral version of the weak maximum principle (\cite[Theorem 9.1]{GilbargTrudinger1983}, see also \cite[Appendix A]{AbbondandoloSchwarz2009}) tells us that
 \[
   \sup_{\Omega} \mu \le \sup_{ \partial \Omega} \mu   + C \| \nabla k(w) \cdot w + k(w) \cdot \nabla w \|_{L^2(\Omega)} = \log T + C \|\nabla k(w) \cdot w + k (w)\cdot \nabla w\|_{L^2(\Omega)} ,
 \]
 where $C$ depends on the $L^2$-norm of $k(w)$ and the diameter of $\Omega$. The diameter of $\Omega$ is bounded thanks to Lemma \ref{lem-omega-bounded}. Since $\Sigma$ is compact and $H^s$ and $J^s$ are asymptotically constant, it is clear $k(w)$ and $ \nabla k(w)$ are bounded in $L^2( \Omega)$. 
 
It remains to see that $ \nabla w$ is bounded in $L^2 ( \Omega)$. It suffices to show that $ \partial_s w$ and $ \partial_t w$ are bounded in $L^2$ using our fixed background norm $ | \cdot|_{ \Sigma}$, i.e. that
$$ \int_{ \Omega} | \partial_s w |_{ \Sigma}^2 \qquad \text{and} \qquad \int_{ \Omega} | \partial_t w|^2_{ \Sigma}$$
are bounded. That $ \partial_s w$ is bounded is easy, since from \eqref{eq-norm-compare} we have (temporarily writing all the sub/superscripts) 
$$ | \partial_s w(s,t)|^2_{ \Sigma} \le \frac{1}{\varepsilon_{Q^s_t} r(s,t)} | \partial_s u(s,t)|_{J_{Q^s_t}}^2.$$
Thus setting $ \varepsilon = \min_{s,t} \varepsilon_{Q^s_t}$, we have
$$ \int_{ \Omega} | \partial_s w|^2_{ \Sigma} \le \frac{1}{\varepsilon T } \int_{ \Omega} \| \partial_s u \|^2 \le \frac{E}{\varepsilon T}.$$
The $ \partial_t w$ term is a little trickier: for this we use the Floer equation to write $\partial_t u = J \partial_s u + X_{H}(u)$, which gives 
$$ \| \partial_t u \|^2 \le 2 \| \partial_s u \|^2 + 2 \| X_H (u) \|^2.$$
Then the same estimate gives 
\begin{align*}
| \partial_t w|^2_{ \Sigma} & \le \frac{1}{\varepsilon r} | \partial_t u|^2 \\
& = \frac{1}{\varepsilon r} | J \partial_s u + X_H(u)|^2 \\
& \le \frac{2}{\varepsilon r}\Big( | \partial_s u |^2 + | X_H(u)|^2 \Big) 
\end{align*}
From \eqref{eq-XbQ} and \eqref{eq-norm} we see that $ \frac{1}{r}|X_H(u)|^2$ is bounded, and this completes the proof. 
\end{proof} 

\section{Symplectic homology revisited}
\label{sec-proofs}
Let us recall the definition of the symplectic homology, $\mathrm{SH}_*(M)$ of an exact symplectic manifold $M$ modelled on a contact manifold $\Sigma$ at infinity. Our approach is slightly non-standard as we will use Hamiltonians which are of the form \eqref{eq:1homo_ham} at infinity (our main result, Theorem \ref{thm:maxprin} implies this is well-defined). For brevity we will work with Liouville manifolds only, rather than the technically more complicated setting of a  $\text{weak}^+$ monotone symplectic manifold as referred to in Theorem \ref{thm:sh_is_small}.

Let $h_t:\Sigma\to \mathbb{R}$ be a 1-periodic contact Hamiltonian such that the time-1 map $\varphi^1_h : \Sigma\to \Sigma$ of the corresponding contact isotopy has no fixed points. We call such contact Hamiltonians \textbf{admissible}.

Floer data for $h$ consists of a (time dependent) Hamiltonian $H\in\mathcal{H}_\circ$ and an almost complex structure $J\in\mathcal{J}$ such that (up to a constant) $H_t(x,r)=r h_t(x)$ at infinity, and such that the pair $(H,J)$ is \textbf{regular} in the sense that the linearisation of the Floer operator  
$$ u \mapsto \partial_s u + J_t(u) \partial_t u - J_t(u)X_{H_t}(u)$$ 
at any zero is surjective. Regularity is a generic condition, and it ensures that the moduli spaces of the solutions of the Floer equation are actually manifolds. In the situation above, we will say that $h$ is the \textbf{slope} of $H.$

Given Floer data $(H,J),$ one can construct a chain complex $\mathrm{CF}_*(H,J)$ which is generated (as a $\mathbb{Z}_2$ vector space) by 1-periodic Hamiltonian orbits of $H$ and graded by (negative) Conley-Zehnder index \cite[Section~2.4]{Salamon1999}. The homology of the chain complex $\mathrm{CF}_*(H,J)$ is denoted by $\mathrm{HF}_*(H,J),$ and called Floer homology. For more details on Floer homology, and proofs of the various asssertions made in these paragraphs, see for instance \cite{Salamon1999}.

The Floer homologies $\mathrm{HF}_*(H,J)$ and $\mathrm{HF}_*(H',J')$ associated to two differnet sets $(H,J)$ and $(H',J')$ for $h$ are canonically isomorphic. This leads to the Floer homology $\mathrm{HF}_*(h)$ which is associated to the admissible contact Hamiltonian $h.$ The dependence on the admissible contact Hamiltonian is essential, namely the Floer homologies corresponding to different contact Hamiltonians are in general not isomorphic. 

Given admissible contact Hamiltonians $h_t,h'_t:\Sigma\to \mathbb{R}$ such that $h_t \leq h'_t,$
using Theorem~\ref{thm:maxprin}, one can construct a \textbf{continuation map}
\[\mathrm{HF}_*(h)\to \mathrm{HF}_*(h').\]
The continuation maps turn the set $\{\mathrm{HF}_*(h) \mid h \text{ admissible}\}$ into a directed system of groups indexed by admissible contact Hamiltonians (with the standard order relation - pointwise $\leq$). The \textbf{symplectic homology} is the direct limit of this system
\[\mathrm{SH}_*(M):=\underset{h}{\lim_{\longrightarrow}}\: \mathrm{HF}_*(h).\]

Note that this definition coincides with the classical finite-slope definition of symplectic homology \cite{Viterbo1996}. This is due to constant admissible contact Hamiltonians form cofinal subset of the set of all admissible contact Hamiltonians.

Apart from continuation maps, there is yet another important class of morphisms between $\mathrm{HF}_*(h),$ the so called \textbf{naturality isomorphisms}. They are associated to the loops of Hamiltonian diffeomorphisms on $M.$ We first introduce some notation. Let $\phi_t:M\to M$ be an isotopy, let $H_t:M\to \mathbb{R}$ be a Hamiltonian, let $J_t$ be a family of almost complex structures on $M,$ and let $\gamma: S^1 \to M$ be a loop on M. Denote
\begin{align*}
&(\phi^\ast H)_t(x):= H_t(\phi_t(x)),\\
&(\phi^\ast J)_t:=\phi_t^\ast J_t,\\
&(\phi^\ast\gamma)(t):=\phi_t^{-1}(\gamma(t)).
\end{align*}

\begin{prop}
\label{prop:naturality}
Let $h_t:\Sigma\to \mathbb{R}$ be an admissible contact Hamiltonian, let $(H,J)$ be Floer data for $h,$ and let $G\in\mathcal{H}$ be a 1-periodic Hamiltonian that generates a loop of Hamiltonian diffeomorphisms. Denote by $g_t:\Sigma \to\mathbb{R}$ the slope of $G.$ Then, the contact Hamiltonian $ \tilde{h}_t:\Sigma\to \mathbb{R}$ defined by
\[ \tilde{h}_t(x):= \frac{(h_t-g_t)(\varphi^1_g(x))}{\kappa_g^t(x)}\]
is admissible, and $((\phi_K)^\ast H,(\phi_G)^\ast J)$ is Floer data for $\tilde{h}.$ The linear map
\[CF(H,J)\to CF((\phi_K)^\ast H,(\phi_G)^\ast J)\]
defined on generators by
\[\gamma \mapsto (\phi_G)^\ast\gamma\]
is an isomorphism. We denote the induced isomorphism on homology level by
\[\mathcal{N}(G)\::\: HF(h)\to HF(\tilde{h}),\]
and call it the naturality isomorphism.
\end{prop}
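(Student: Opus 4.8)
The plan is to realise $\mathcal{N}(G)$ as the map induced by ``pulling back along the loop $\phi_G^t$'', applied simultaneously to one-periodic orbits and to Floer cylinders. For $u\colon\mathbb{R}\times S^1\to M$ put $\big((\phi_G)^\ast u\big)(s,t):=(\phi_G^t)^{-1}\big(u(s,t)\big)$; on a one-periodic orbit this restricts to $\gamma\mapsto(\phi_G)^\ast\gamma$. Since $G$ generates a \emph{loop} we have $\phi_G^1=\mathrm{id}_M$, and since $G$ is $1$-periodic the flow satisfies $\phi_G^{t+1}=\phi_G^t\circ\phi_G^1=\phi_G^t$; hence $(\phi_G)^\ast\gamma$ is again a $1$-periodic loop and $(\phi_G)^\ast$ is a bijection, with inverse $v\mapsto\big(t\mapsto\phi_G^t(v(t))\big)$. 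By the composition formula for Hamiltonian flows, $(\phi_G^t)^{-1}\circ\phi_H^t=\phi_K^t$ where
\[
K_t:=(H_t-G_t)\circ\phi_G^t=(\phi_G^\ast H)_t-(\phi_G^\ast G)_t,
\]
so $(\phi_G)^\ast$ sends $\ker\mathfrak{a}_H\cong\mathrm{Fix}(\phi_H^1)$ bijectively onto $\ker\mathfrak{a}_K\cong\mathrm{Fix}(\phi_K^1)$. Because each $\phi_G^t$ is a symplectomorphism of $(M,\omega)$, a short computation yields the key identity
\[
\partial_s\big((\phi_G)^\ast u\big)+\mathfrak{m}_{K,(\phi_G)^\ast J}\big((\phi_G)^\ast u(s,\cdot)\big)=D\big((\phi_G^t)^{-1}\big)\Big(\partial_s u+\mathfrak{m}_{H,J}\big(u(s,\cdot)\big)\Big),
\]
so $(\phi_G)^\ast$ is a bijection from the Floer trajectories of $(H,J)$ onto those of the pulled-back pair $\big(K,(\phi_G)^\ast J\big)$.

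Next I would check that $\big(K,(\phi_G)^\ast J\big)$ is genuine Floer data for $\tilde{h}$. At infinity $H_t=rh_t+c_H$, $G_t=rg_t+c_G$ and $\phi_G^t(x,r)=\big(\varphi_g^t(x),r/\kappa_g^t(x)\big)$, so $K_t$ is again $1$-homogeneous in $r$ there, and computing its slope gives exactly $\tilde{h}$ --- equivalently, $\tilde{h}$ is the contact Hamiltonian generating the contactomorphism part $(\varphi_g^t)^{-1}\circ\varphi_h^t$ of $\phi_K^t$ at infinity. Since $\phi_G^1=\mathrm{id}$ we have $\phi_K^1=\phi_H^1$ and $\varphi_{\tilde{h}}^1=\varphi_h^1$, so $\varphi_{\tilde{h}}^1$ has no fixed points and the fixed points of $\phi_K^1$ are non-degenerate; hence $K\in\mathcal{H}_\circ$ and $\tilde{h}$ is admissible. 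For the almost complex structure the crucial point is that $\phi_G^t$ preserves the Liouville field $Y$ at infinity (immediate from its displayed form), so it conjugates $X_{Q_t}$ to $X_{Q'_t}$ with $Q'_t=rq'_t$, $q'_t:=(q_t\circ\varphi_g^t)/\kappa_g^t$, and carries the twisted-SFT plane field $N_{Q_t}$ onto $N_{Q'_t}$; a short verification then gives $(\phi_G)^\ast J=J_{Q'_t}$ for a corresponding compatible $j'$ on $\xi$ (allowed to depend on $t$), so $(\phi_G)^\ast J\in\mathcal{J}$. Finally, regularity of $(H,J)$ transfers to $\big(K,(\phi_G)^\ast J\big)$ because $(\phi_G)^\ast$ conjugates the linearised Floer operator at each zero.

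It then remains to assemble a chain isomorphism. The bijection $u\mapsto(\phi_G)^\ast u$ preserves energy pointwise, because $\langle\,\cdot\,,\,\cdot\,\rangle_{(\phi_G)^\ast J}$ is, by construction, the $\phi_G^t$-pullback of $\langle\,\cdot\,,\,\cdot\,\rangle_J$; it restricts to the asserted bijection on generators, and it carries each moduli space of (possibly broken) Floer trajectories for $(H,J)$ compatibly onto the corresponding one for $\big(K,(\phi_G)^\ast J\big)$. Hence the $\mathbb{Z}_2$-linear extension of $\gamma\mapsto(\phi_G)^\ast\gamma$ is an isomorphism of chain complexes $\mathrm{CF}(H,J)\to\mathrm{CF}\big(K,(\phi_G)^\ast J\big)$, and it descends to the naturality isomorphism $\mathcal{N}(G)\colon\mathrm{HF}(h)\to\mathrm{HF}(\tilde{h})$ on homology. (The loop $\phi_G^t$ may contribute a global Maslov-type grading shift, which is harmless here.)

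I expect the only real obstacle to be the input needed in the second paragraph: that $(\phi_G)^\ast J$ again lies in $\mathcal{J}$, and, above all, that $\tilde{h}$ is an admissible contact Hamiltonian for which $\mathrm{HF}(\tilde{h})$ is defined. Even when the source data is as classical as possible --- say $q_t\equiv1$ and $h_t$ invariant under the Reeb flow --- the pulled-back function $q'_t=1/\kappa_g^t$ is genuinely point-dependent and the pulled-back slope $\tilde{h}_t$ is in general \emph{not} Reeb-invariant, so the classical maximum principle does not confine the Floer cylinders computing $\mathrm{HF}(\tilde{h})$. It is precisely Theorem~\ref{thm:maxprin} --- applied to the constant homotopy $(H,J)$, to the analogous data for $\tilde{h}$, and to the continuation homotopies entering the definition of $\mathrm{HF}(\tilde{h})$ --- that makes both the source and target of $\mathcal{N}(G)$ meaningful. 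Everything else (the composition formula, the slope computation, the transfer of non-degeneracy and regularity, and the bijection of compactified moduli spaces) is routine once one stays within the classes $\mathcal{H}_\circ$ and $\mathcal{J}$.
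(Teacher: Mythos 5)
Your argument is correct and is essentially the paper's own: the paper simply defers to the proof of \cite[Lemma~2.27]{Uljarevic2015}, which is exactly the pullback-by-the-loop construction you carry out, with the transformed Hamiltonian $K_t=(H_t-G_t)\circ\phi_G^t$ (this is the $K$ left undefined in the statement of the proposition) and with the slope computation yielding $\tilde h$. You also correctly isolate the one genuinely new ingredient relative to the classical naturality argument, namely that admissibility of the generally non-Reeb-invariant $\tilde h$ and membership of $(\phi_G)^\ast J$ in $\mathcal{J}$ (twisted SFT-type) are precisely what Theorem \ref{thm:maxprin} is needed for.
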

\begin{proof}
See the proof of \cite[Lemma~2.27]{Uljarevic2015}.
\end{proof}

Finally let us discuss the proofs of Theorem \ref{thm:infinite_order}. It is essentially identical to the argument from \cite[Theorem 4.13]{Uljarevic2015}, modulo use of the new maximum principle. Nevertheless for the reader's convenience we will provide a proof.

\begin{proof}[Proof of Theorem~\ref{thm:infinite_order}]
Let $h_t:\Sigma\to \mathbb{R}^+$, $t \in S^1$, be a positive, 1-periodic contact Hamiltonian that generates a loop $ \varphi_h^t$ of contactomorphisms. As in \eqref{eq:beta}, let $\beta:\mathbb{R}^+\to[0,1]$ be a smooth function 
$$
  \beta : \mathbb{R}^+ \to [0, 1], \qquad \beta(r) = \begin{cases}
  0, & r \in (0,1/4), \\
  1, & r \in (3/4, \infty),
  \end{cases}
   \qquad \beta'(r) \ge 0,
$$
Denote by $H_t:M\to \mathbb{R}$ the Hamiltonian that is equal to 0 on $M \setminus S \Sigma$ and to $(x,r)\mapsto\beta(r)rh_t(x)$ on $ S \Sigma$.
The time-1 map, $\phi_H^1$ represents the class $\Theta\left(\left[\varphi_h^t\right]\right)$ in the symplectic mapping class group. Assume, by contradiction, that the class $\left[\phi_H^1\right]$ is trivial. This means there exists a compactly supported Hamiltonian $L_t:M\to\mathbb{R}$ such that $\phi_H^1=\phi_L^1,$ and such that $L_{t+1}=L_t.$ Consider the Hamiltonian $G_t:M\to \mathbb{R}$ that generates the isotopy $\phi_H^t\circ\left(\phi_L^t\right)^{-1}.$ Note that $G$
generates a loop of Hamiltonian diffeomorphisms.
Denote by $G^m:M\to\mathbb{R}$ the Hamiltonian
\[G^m_t:=\left\{ \begin{matrix} m G_{mt}& t\in \left[0,\frac{1}{m}\right]\\ mG_{mt}& t\in\left[\frac{1}{m}, \frac{2}{m}\right]\\ \cdots& \\ m G_{mt}& t\in\left[ 1-\frac{1}{m}, 1\right],\end{matrix} \right.\]
and by $g^m$ the corresponding slopes. The isotopy of the Hamiltonian $G^m$ is equal to the concatenation of $m$ copies of $\{\phi_G^t\}.$
Let $\varepsilon>0$ be a small enough positive number. Denote by $f^m:\Sigma\to \mathbb{R}$ the admissible (cf. Proposition \ref{prop:naturality}) slope 
\[f^m_t(x):=\varepsilon\cdot \kappa^t_{g^m}\circ \left(\varphi^t_{g^m}\right)^{-1}+g^m_t.\]
The naturality with respect to $G^m$ provides the isomorphism
\begin{equation}\label{eq:natiso}
\mathrm{HF}(f^m)\to \mathrm{HF}(\varepsilon).
\end{equation}
\sloppy Hence the groups $\mathrm{HF}(f^m)$ are all isomorphic to the singular homology $\mathrm{H}(M_1,\Sigma;\mathbb{Z}_2).$
Since $\min f^m \ge m\cdot\min h, $ and since $\min h>0, $ for each admissible $a\in\mathbb{R}$ the map
\[\mathrm{HF}_*(a)\to \mathrm{SH}_*(M;\mathbb{Z}_2)\]
factors through $\mathrm{HF}_*(f^m)$ for large enough $m.$ Hence 
\[\dim \mathrm{SH}(M;\mathbb{Z}_2)\leq \dim \mathrm{H}(M_1,\Sigma;\mathbb{Z}_2)=\dim \mathrm{H}(M;\mathbb{Z}_2).\]
Contradiction! Therefore, the class $\Theta\left(\left[\varphi_h^t\right]\right)$ is not trivial. Since the iterates of the loop $\{\varphi^t_h\}$ are generated by positive contact Hamiltonians as well, the same argument shows that the class $\Theta\left(\left[\varphi_h^t\right]\right)$ is in fact of infinite order in the symplectic mapping class group.
\end{proof}

\section{Translated points and the positive symplectic homology of a contactomorphism}
\label{sec:tp}
\begin{defn}
Let $ ( \Sigma , \alpha)$ be a co-oriented contact manifold with a choice of contact form. Let $ \theta^t : \Sigma \to \Sigma$ denote the Reeb flow of $ \alpha$. Let $ \varphi : \Sigma \to \Sigma$ denote a contactomorphism. Let $ \kappa : \Sigma \to \mathbb{R}^+$ denote the conformal factor of $ \varphi$ and $ \alpha$, so that $ \varphi^* \alpha = \kappa \alpha$. A \textbf{translated point} of $ \varphi$ and $ \alpha$ is a point $ x \in \Sigma$ such that there exists $ T \in \mathbb{R}$ such that
$$ \varphi(x) = \theta^{-T}(x), \qquad \kappa(x) =1.$$
We call the minimal (in absolute value) such $T$ the \textbf{period} of $ x$ (the word ``minimal'' is included to deal with the case where the translated point lies on a closed Reeb orbit of $ \alpha$.) This notion was introduced by Sandon in \cite{Sandon2012}, although it can be seen as a special case of a leaf-wise intersection point as introduced by Moser in \cite{Moser1978}.
\end{defn}

In this section we show how the maximum principle proved here can be used to prove that if a Liovuille domain has infinite-dimensional symplectic homology then any contactomorphism of the boundary  isotopic to the identity through contactomorphisms has infinitely many translated points. This result is not new; in \cite{AlbersMerry2013a} the same result was obtained using Rabinowitz Floer homology (and symplectic homology is infinite dimensional if and only if Rabinowitz Floer homology is, c.f. \cite{CieliebakFrauenfelderOancea2010}). We then go on to explain how by working with the analogue of \textbf{positive} symplectic homology, we can define a more refined invariant which we call the positive symplectic homology of a contactomorphism. Its properties will be elucidated in a sequel to the present paper.
\newline

Let $(M, d \lambda)$ denote an exact symplectic manifold modelled on $ \Sigma$ at infinity as before. Let $ \varphi^t : \Sigma \to \Sigma$ be a path of contactomorphisms with $ \varphi_0 = \mathrm{Id}$, and let $ h_t : \Sigma \to \mathbb{R}$ denote the contact Hamiltonian associated to $ \varphi^t$. 

After reparametrising $ \varphi^t$ if necessary, we may assume $h_t$ is 1-periodic in $t$. Let $ \kappa^t : \Sigma \to \mathbb{R}^+$ denote the conformal factor of $ \varphi^t$. Denote by $ \theta^t : \Sigma  \to \Sigma$ the Reeb flow of $ \alpha := \lambda|_{ \Sigma}$. 

Fix a small $ \varepsilon > 0$. Let $f^c$ denote the function on $ \Sigma$ given by
$$ f^c_t(x) = h_t(x) + c \kappa_t(x),$$
and let $F^c$ denote a Hamiltonian on $M$ such that 
\begin{equation}
\label{Fc}
F^c_t (x) =  rf^c_t(x), \qquad (x,r) \in \Sigma \times ( \varepsilon , \infty).
\end{equation}
The 1-periodic orbits of this Hamiltonian contained in $ \Sigma \times ( \varepsilon , \infty)$ correspond to translated points of $ \varphi^1$ with period $c$. Indeed, the Hamiltonian flow of $F^c_t$ is given by 
$$ \phi_{F^c}^t( x, r) = \left( \varphi^t( \theta^{ct}(x)), \frac{r}{\kappa^t( \theta^{ct}(x))} \right),$$
and hence if $ \phi_{F^c}^1(x,r) = (x,r)$ then $ \theta^c(x)$ is a translated point of $ \varphi^1$ with period $c$.

In particular, if $c$ is not the period of a translated point of $ \varphi^1$ then $f^c$ is admissible, and hence $ \mathrm{HF}(f^c)$ is well-defined. Since $ \kappa^t$ is positive, if $ c \le c'$ then $f^c \le f^{c'}$, and hence there is a well-defined continuation map 

$$ \mathrm{HF}(f^c) \to \mathrm{HF}(f^{c'}).$$

This gives almost immediately the following result.

\begin{prop}
\label{proptp}
Suppose $ \mathrm{SH}_*(M)$ is infinite dimensional. Then any contactomorphism of the boundary isotopic to the identity through contactomorphisms either has infinitely many translated points or a translated point on a closed Reeb orbit.
\end{prop}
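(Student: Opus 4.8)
The plan is to derive Proposition~\ref{proptp} by a cofinality argument with the groups $\mathrm{HF}(f^c)$, completely parallel to the proof of Theorem~\ref{thm:infinite_order}. First I would argue by contradiction: suppose $\varphi^1$ has only finitely many translated points and none of them lies on a closed Reeb orbit. Then the set of periods of translated points of $\varphi^1$ is a finite set $\{c_1, \dots, c_N\} \subset \mathbb{R}$, and every $c \notin \{c_1,\dots,c_N\}$ is an admissible value in the sense that $f^c_t = h_t + c\kappa_t$ is an admissible contact Hamiltonian and hence (via Theorem~\ref{thm:maxprin}, which makes $\mathrm{HF}(f^c)$ well-defined using Hamiltonians of the form \eqref{Fc}) the Floer homology $\mathrm{HF}(f^c)$ exists. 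In particular, for all $c$ large enough $f^c$ is admissible, and the continuation maps $\mathrm{HF}(f^c) \to \mathrm{HF}(f^{c'})$ for $c \le c'$ (both admissible) are defined since $f^c \le f^{c'}$.

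Next I would establish two facts. First, for any admissible constant Hamiltonian $a \in \mathbb{R}$, one has $a \le f^c_t$ pointwise once $c$ is large (because $\kappa_t > 0$ is bounded below on the compact $\Sigma$ and $h_t$ is bounded), so the canonical map $\mathrm{HF}(a) \to \mathrm{SH}_*(M)$ factors through $\mathrm{HF}(f^c)$. Since constant Hamiltonians are cofinal in the directed system defining $\mathrm{SH}_*(M)$, this shows $\mathrm{SH}_*(M)$ is a direct limit over the (eventually) directed family $\{\mathrm{HF}(f^c)\}_{c \text{ large}}$. Second — and this is the key point — I claim the continuation maps $\mathrm{HF}(f^c) \to \mathrm{HF}(f^{c'})$ are isomorphisms whenever the interval $[c,c']$ contains no period $c_i$. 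This is because on such an interval the 1-periodic orbits of $F^{c}$ inside $\Sigma\times(\varepsilon,\infty)$ vary in a compactly-parametrised family with no bifurcation (no new translated point appears or disappears), so a standard continuation/invariance argument shows the homology is locally constant in $c$. Combining these, $\mathrm{HF}(f^c)$ is, up to canonical isomorphism, independent of $c$ once $c > \max_i c_i$, and therefore $\mathrm{SH}_*(M) \cong \mathrm{HF}(f^c)$ for any such $c$.

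Finally I would note that $\mathrm{HF}(f^c)$ for a fixed admissible $c$ is a \emph{finitely generated} $\mathbb{Z}_2$-vector space, since it is the Floer homology of a single Floer datum whose generators are the (finitely many, by nondegeneracy) 1-periodic orbits of a Hamiltonian of the form \eqref{Fc} plus the finitely many orbits in the compact interior region $M \setminus (\Sigma \times (\varepsilon,\infty))$. Hence $\dim \mathrm{SH}_*(M) = \dim \mathrm{HF}(f^c) < \infty$, contradicting the hypothesis that $\mathrm{SH}_*(M)$ is infinite dimensional. This forces $\varphi^1$ to have either infinitely many translated points or a translated point lying on a closed Reeb orbit.

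I expect the main obstacle to be making precise the invariance statement in the second paragraph — namely that $\mathrm{HF}(f^c)$ is genuinely independent of the admissible value $c$ as long as one does not cross a period. One must check that the relevant continuation homotopy $F^{c^s}$, $s\in\mathbb{R}$, interpolating linearly between $f^c$ and $f^{c'}$, together with a suitable homotopy of almost complex structures, fits into the framework of the Extended Maximum Principle (Theorem~\ref{thm:maxprin}) — i.e. that it is a continuation homotopy in $\mathcal{H}$ and $\mathcal{J}$ — so that the moduli spaces of continuation solutions have the required $L^\infty$ bounds and the continuation maps in both directions compose to the identity. The positivity of $\kappa^t$ guarantees monotonicity of $F^{c^s}$ in $s$, which is exactly what is needed; the remaining work is the routine verification that no orbit escapes to the region $\Sigma\times(0,\varepsilon)$ where $F^c$ is only assumed to have the prescribed form on $\Sigma\times(\varepsilon,\infty)$, and that the absence of periods in $[c,c']$ prevents the continuation map from failing to be an isomorphism. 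Once this is in place the rest is formal.
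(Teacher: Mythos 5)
Your proposal is correct and follows essentially the same route as the paper: argue by contradiction that the periods of translated points form a bounded set, use the (standard) continuation isomorphisms $\mathrm{HF}(f^c)\cong\mathrm{HF}(f^{c'})$ for $c,c'$ beyond the maximal period, note that each $\mathrm{HF}(f^c)$ is finitely generated, and conclude via cofinality of the family $\{f^c\}$ that $\mathrm{SH}_*(M)$ would be finite dimensional. The extra detail you supply (why the periods form a finite set, why cofinality holds, and the need to check the homotopy $F^{c^s}$ fits the framework of Theorem~\ref{thm:maxprin}) is exactly what the paper leaves implicit under ``the usual continuation arguments.''
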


\begin{proof}
If not, then there exists a maximum $C $ such that $ \varphi^1$ has no translated points with period greater than $c$. The usual continuation arguments then show that 
$$ \mathrm{HF}(f^c) \cong \mathrm{HF}(f^{c'}), \qquad \forall\, C < c < c'.$$
Since $ \mathrm{HF}(f^c)$ is finitely generated for any finite $c$ (by assumption), it follows that 
$$\underset{c}{\lim_{\longrightarrow}}\: \mathrm{HF}(f^c)$$
is finite-dimensional. But since $f^c$ forms a cofinal family, this direct limit agrees with $ \mathrm{SH}(M)$; a contradiction.
\end{proof}

The group $$\underset{c}{\lim_{\longrightarrow}}\: \mathrm{HF}(f^c)$$ is not so interesting (as an invariant of $ \varphi^t$) since it just agrees with the symplectic homology of $M$. But we can rectify this as follows. Suppose $ \varphi^1$ has no fixed points, so that $ \mathrm{HF}(h)$ is well-defined. Let $ p >0$ be such that any translated point $x$ of $ \varphi^1$ with period $T>0$ has $ T > p$ (such $p$ exists as $ \varphi^1$ has no fixed points.)

Let $H_t$ be a Hamiltonian on $M$ which agrees with $rh_t(x)$ on $ \Sigma \times ( \varepsilon , \infty)$ and is $C^2$ small and Morse on the rest of $M$. Let $ A > 0$ be such that any critical point $z(t)$ of the action functional  
$$ \mathcal{A}_{H} (z) := \int_{S^1}z^* \lambda - \int_{S^1}H(z) \,dt.$$
has action $ \mathcal{A}_{H}(z) \le A$ (this is well defined since $ \varphi^1$ has no fixed points .)

Fix now a number $ c  > p$ which is not the period of a translated point of $ \varphi^1$ and choose $\varepsilon < r_0 < r_1$. Consider a function $ g : \mathbb{R}^+ \to \mathbb{R}^+$ which satisfies
$$ g(r) = 0, \qquad \forall\, 0 \le r \le r_0, \qquad \text{and} \qquad  g'(r) = c, \qquad \forall \, r \ge r_1,$$
and
$$ rg'(r) - g(r) > 0, \qquad \forall \, r > r_0.$$
We now modify the function $ F^c$ from \eqref{Fc} to a new function $\tilde{F}^c : M \to \mathbb{R}$ by requiring that
$$ \tilde{F}^c_t(x,r) = rh_t(x) + g( \kappa_t(x)) \qquad \forall \, (x,r) \in \Sigma \times ( \varepsilon , \infty).$$
For an appropriate choice of $r_0 < r_1$ (depending on $p$ and $A$), the critical points of $ \mathcal{A}_{\tilde{F}^c}$ come in two forms:
\begin{itemize}
  \item Critical points $z(t)$ corresponding to translated points of $ \varphi^1$ with period less than $c$ and action $ \mathcal{A}_{\tilde{F}^c}(z) > A$.
  \item Critical points $z(t)$ of $ \mathcal{A}_{H}$ with action $\mathcal{A}_{\tilde{F}^c}(z) \le A$.
\end{itemize}
Let $ \mathrm{CF}^+( f^c)$ denote the subcomplex generated by those quotienting out those critical points with action less or equal to $A$. Let $ \mathrm{HF}^+(f^c)$ denote the associated homology. Thus by construction this homology is generated by the translated points of $ \varphi^1$ with period less than $c$. Now set 
$$ \mathrm{SH}^+( \varphi^t, M) := \underset{c}{\lim_{\longrightarrow}}\: \mathrm{HF}^+(f^c).$$
We call $ \mathrm{SH}^+( \varphi^t, M)$ the \textbf{positive symplectic homology for the contact isotopy $ \varphi^t$}. It is defined for any isotopy with $ \varphi^1 \ne \mathrm{Id}$. By construction we obtain a long exact sequence
$$ \dots \to \mathrm{HF}_*(h) \to \mathrm{SH}^+_*( \varphi^t, M) \to \mathrm{SH}_*(M) \to \mathrm{HF}_{*-1}(h) \to \dots $$
If $ \varphi^t = \theta^{ a t}$ is a piece of Reeb flow, where $ a>0$ is small, then $$ \mathrm{HF}_*(h) = \mathrm{H}_{*+n}(M, \Sigma)$$ and thus $ \mathrm{SH}^+( \varphi^t, M)$ agrees with the \textbf{positive symplectic homology} $ \mathrm{SH}^+(M)$ of $M$. But in general this is not true. Up to a grading shift, these groups only depend on the time-1 map $ \varphi^1$. The groups $ \mathrm{SH}^+( \varphi^t, M)$ can be used to give more interesting results on the existence of translated points than Proposition \ref{proptp}, and we will continue its study in this paper's sequel.
\bibliographystyle{plain}
\bibliography{bib}
\end{document}